\numberwithin{equation}{section}
\newtheorem{theorem}{Theorem}[section]
\newtheorem{lemma}[theorem]{Lemma}
\newtheorem{proposition}[theorem]{Proposition}
\newtheorem{corollary}[theorem]{Corollary}
\theoremstyle{definition}
\theoremstyle{remark}
\newtheorem{remark}[theorem]{Remark}
\newcommand{\End}{\mathrm{End}}
\newcommand{\U}{\mathrm{U}}
\newcommand{\SU}{\mathrm{SU}}
\newcommand{\GL}{\mathrm{GL}}
\newcommand{\C}{\mathbb{C}}
\newcommand{\R}{\mathbb{R}}
\newcommand{\Z}{\mathbb{Z}}
\newcommand{\N}{\mathbb{N}}
\newcommand{\T}{\mathbb{T}}
\newcommand{\B}{\mathbb{B}}
\newcommand{\mH}{\mathcal{H}}
\newcommand{\mP}{\mathcal{P}}
\begin{document}

\title[Separately Radial and Radial Toeplitz and Representation Theory]{Separately Radial and Radial Toeplitz Operators on the Unit Ball and Representation Theory}

\author{Raul Quiroga-Barranco}
\address{Centro de Investigaci\'{o}n en Matem\'{a}ticas, Guanajuato, Mexico}
\email{quiroga@cimat.mx}

\dedicatory{To Sergei Grudsky on the occasion of his 60th birthday.}

\thanks{Research supported by SNI and a Conacyt Grant}
\keywords{Unit ball, Toeplitz operators, holomorphic discrete series}
\subjclass[2010]{47B35, 32A36, 22E46, 32M15}

\begin{abstract}
    We study Toeplitz operators with separately radial and radial symbols on the weighted Bergman spaces on the unit ball. The unitary equivalence of such operators with multiplication operators on $\ell^2$ spaces was previously obtained by analytic methods in \cite{QVReinhardt} and \cite{GKV}, respectively. We prove that the same constructions can be performed with a purely representation theoretic approach to obtain the same conclusions and formulas. However, our method is shorter, more elementary and more elucidating.
\end{abstract}

\maketitle

\section{Introduction}\label{sec:Introduction}
The weighted Bergman spaces on the unit ball, as well as in other bounded domains, are of fundamental importance in analysis (see for example \cite{Zhu}). This is very much due to the existence of a reproducing kernel for Bergman spaces, which allows for Toeplitz operators to be considered naturally.

It has been found very useful to study Toeplitz operators whose symbols have special symmetries. With this respect, a remarkable work is \cite{GKV} where the Toeplitz operators with radial symbols (see Section~\ref{sec:Toeplitz_radial} for the definition) were proved to generate a commutative $C^*$-algebra. The proof was based on the construction of a Bargmann type transform that allows to simultaneously diagonalize the Toeplitz operators into multiplication operators over an $\ell^2$ space. Similar results with the same approach were obtained in \cite{QVReinhardt} for Toeplitz operators with separately radial symbols (see Section~\ref{sec:Toeplitz_separately_radial}), thus exhibiting commutative $C^*$-algebras generated by such operators.

The existence of commutative $C^*$-algebras generated by Toeplitz operators was further extended to the unit ball in \cite{GQVJFA}, \cite{QVBall1} and \cite{QVBall2}. Moreover, these works made pretty clear the importance of the Lie subgroups of the biholomorphism group of the corresponding domain. They also allowed to have a better understanding of the commutative $C^*$-algebras generated by Toeplitz operators: a classification was given in the case of the unit disk and several non-trivial examples were constructed on the unit ball.

Recently, in \cite{DOQJFA} it was established the existence of several types of symbols on every bounded symmetric domain for which the Toeplitz operators generate commutative $C^*$-algebras. Most of such symbols were given as invariant functions with respect to symmetric subgroups of the biholomorphism group of the corresponding domains, and the cases presented in \cite{DOQJFA} include the separately radial and radial symbols. The special role of the holomorphic discrete series associated to bounded symmetric domains (see Section~\ref{sec:Preliminaries} for the definition) was also put forward. The proofs of the results from \cite{DOQJFA} were based on the existence of multiplicity-free restrictions for the holomorphic discrete series. Such proofs can, in many cases, be traced back to the construction of Bargmann type transforms similar to the ones used in \cite{GKV,QVBall1,QVBall2}. In other words, there is an implicit relationship between the analytic and the representation theoretic approaches to the study of Toeplitz operators whose symbols have symmetries.

The goal of this work is to explicitly exhibit the relationship of the analytic and the representation theoretic approaches for the case of separately radial and radial symbols on the unit ball. We also explain how understanding such relationship allows to have better knowledge of these special types of Toeplitz operators.

A fundamental fact to keep in mind is that, with respect to the holomorphic discrete series, if a symbol is invariant under a subgroup, then the corresponding Toeplitz operator intertwines the action of the subgroup (see Proposition~\ref{prop:Toeplitz_intertwining}).

For the case of the separately radial symbols, that correspond to the subgroup $\T^n$ of the biholomorphism group of the unit ball $\B^n$, we define in Theorem~\ref{thm:End_Tn} a unitary operator $R$ that plays the role of the Bargmann type transform found in \cite{QVReinhardt}. However, its definition and the proof of its properties is purely representation theoretic. The arguments are in fact quite elementary based only on Schur's Lemma and the very basic facts of characters on tori. That this unitary map $R$ plays indeed the role of a Bargmann type transform is established in Theorem~\ref{thm:Toeplitz_separately_radial}, where we exhibit the simultaneous diagonalization of the Toeplitz operators with separately radial symbols into multiplication operators on $\ell^2(\N^n)$. Furthermore, we obtain the same expression for the functions that define the multiplication operators as those found in Theorem~10.1 from \cite{QVBall1}. Nevertheless, our proof is much more elementary and short. In fact, our representation theoretic approach allows to have a better understanding of the Toeplitz operators: we prove in Corollary~\ref{cor:orthogonality_relations_Tn} that the Toeplitz operators with separately radial symbols satisfy orthogonality relations.

A corresponding study is performed for radial symbols, for which the subgroup is $\U(n)$ with its linear action on $\B^n$. In Theorem~\ref{thm:End_U(n)} we consider the same unitary map $R$ from Theorem~\ref{thm:End_Tn} which turns out to provide a simultaneous diagonalization of the Toeplitz operators with radial symbols as found in \cite{GKV}. Our construction and the proof of the properties are again purely representation theoretic. This time we use Schur-Weyl duality but in its most simplest form. In any case, the main point is the fact that the spaces of homogeneous polynomials with a fixed degree are irreducible representations of the general linear group $\GL(n,\C)$. That $R$ plays the role of a Bargmann type transform is now proved in Theorem~\ref{thm:Toeplitz_radial}. But in this case the functions for the multiplication operators unitarily equivalent to Toeplitz operators with radial symbols are proved to be constant on the multi-indices with the same length, just as established in \cite{GKV}. This provides a function that belongs to $\ell^\infty(\N)$, as it was also observed in \cite{GKV}. However, with our representation theoretic approach we can provide an explanation to this behavior: a Toeplitz operator with radial symbol acts by a multiple of the identity on the spaces of homogeneous polynomials of the same degree. The point is that the latter are precisely the irreducible components of the representation of $\U(n)$ on every weighted Bergman space on the unit ball (see Proposition~\ref{prop:Poly_U(n)}). Finally, Theorem~\ref{thm:Toeplitz_radial} provides the same expression for the functions of the multiplication operators as those found in Theorem~3.1 from \cite{GKV}. We also prove in Corollary~\ref{cor:orthogonality_relations_U(n)} a set of orthogonality relations satisfied by the Toeplitz operators with radial symbols. Again, the proofs are shorter and more elementary than those found in \cite{GKV}.

\section{Preliminaries}\label{sec:Preliminaries}

\subsection{Intertwining operators on direct sums}
Let $H$ be a Lie group and $\pi$ a unitary representation on a Hilbert space $\mH$. We recall that a bounded operator $T : \mH \rightarrow \mH$ is called intertwining if and only if
\[
    T(\pi(h) v) = \pi(h) T(v),
\]
for every $v \in \mH$ and $h \in H$. In this case, we also say that $T$ intertwines the representation of $H$ and we denote by $\End_H(\mH)$ the algebra of such intertwining operators.

The following result is well known, but we present its proof for the sake of completeness.

\begin{proposition}
    \label{prop:End_H(mH)-commutative}
    Let $H$ be a Lie group and $\pi$ a unitary representation on a Hilbert space $\mH$. Suppose that $\mH$ contains a dense subspace that can be algebraically decomposed as
    \[
        V = \sum_{j \in J} \mH_j
    \]
    where the subspaces $\mH_j$ are mutually orthogonal, closed in $\mH$ and irreducible $H$-invariant modules. Then, the following conditions are equivalent
    \begin{enumerate}
      \item $\mH_{j_1} \not\cong \mH_{j_2}$, as $H$-modules for every $j_1 \not= j_2$,
      \item $\End_H(\mH)$ is commutative.
    \end{enumerate}
\end{proposition}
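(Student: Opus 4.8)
The plan is to reduce everything to Schur's Lemma by decomposing an intertwining operator into ``blocks'' indexed by pairs $(j_1,j_2)$. First I would observe that, since the $\mH_j$ are mutually orthogonal and closed and $V = \sum_{j} \mH_j$ is dense, the Hilbert space splits as an orthogonal direct sum $\mH = \widehat{\bigoplus}_{j\in J}\mH_j$, and that each orthogonal projection $P_j$ onto $\mH_j$ commutes with every $\pi(h)$ (this uses that $\mH_j$ is $\pi$-invariant and that $\pi$ is unitary). Then, for $T\in\End_H(\mH)$ and indices $j_1,j_2$, I would set $T_{j_2 j_1} = P_{j_2}\,T\,\iota_{j_1}\colon \mH_{j_1}\to\mH_{j_2}$, where $\iota_{j_1}$ denotes the inclusion. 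A one-line computation using $P_{j_2}\pi(h)=\pi(h)P_{j_2}$ shows that each $T_{j_2 j_1}$ is an intertwiner between the irreducible modules $\mH_{j_1}$ and $\mH_{j_2}$.

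For the implication $(1)\Rightarrow(2)$, I would invoke Schur's Lemma in the form valid for unitary representations: since the $\mH_j$ are pairwise non-isomorphic, $T_{j_2 j_1}=0$ whenever $j_1\neq j_2$, while $T_{jj}=\lambda_j(T)\,I$ for a scalar $\lambda_j(T)$. Hence every $T\in\End_H(\mH)$ is diagonal with respect to the decomposition, acting as multiplication by the bounded family $(\lambda_j(T))_{j}$, the bound being $\abs{\lambda_j(T)}\le\norm{T}$. Two such diagonal operators commute entrywise, so $\End_H(\mH)$ is commutative.

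For $(2)\Rightarrow(1)$ I would argue by contraposition. Assuming some pair $j_1\neq j_2$ satisfies $\mH_{j_1}\cong\mH_{j_2}$, I would choose a nonzero intertwiner $\phi\colon\mH_{j_1}\to\mH_{j_2}$, which by Schur's Lemma may be normalized to be unitary with intertwining inverse $\phi^{-1}$. I would then define the bounded operators $A=\iota_{j_2}\,\phi\,P_{j_1}$ and $B=\iota_{j_1}\,\phi^{-1}\,P_{j_2}$ on $\mH$, each intertwining by the computation already used above. A direct calculation gives $AB=P_{j_2}$ and $BA=P_{j_1}$; since $j_1\neq j_2$ these projections are distinct, so $A$ and $B$ do not commute and $\End_H(\mH)$ is non-commutative. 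This is exactly the familiar fact that the endomorphism algebra of two copies of an irreducible module is a $2\times 2$ matrix algebra.

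I expect the only genuine subtlety, rather than a real obstacle, to be the passage from the algebraic decomposition of the dense subspace $V$ to the Hilbert-space orthogonal decomposition of $\mH$, together with the bookkeeping needed to guarantee that the block operators $T_{j_2 j_1}$ are well defined and bounded and that $T$ is faithfully recovered from its blocks. Everything else is a direct application of Schur's Lemma, and the construction witnessing the converse is completely explicit.
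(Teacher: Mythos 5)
Your proof is correct and follows essentially the same route as the paper: the orthogonal Hilbert-space decomposition plus Schur's Lemma applied to the blocks $P_{j_2}T\iota_{j_1}$ gives $(1)\Rightarrow(2)$, and for $(2)\Rightarrow(1)$ your operators $A$ and $B$ are exactly the off-diagonal ``matrix units'' of the $M_2(\C)$ subalgebra $\End_H(\mH_{j_1}\oplus\mH_{j_2})$ that the paper exhibits (your version is marginally leaner, since you only need two non-commuting elements rather than the full matrix algebra, and the unitarity normalization of $\phi$ is harmless but unnecessary). No gaps; the density/continuity bookkeeping you flag is handled the same way in the paper.
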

\begin{proof}
    First we note that we have
    \[
        \mH = \bigoplus_{j \in J} \mH_j
    \]
    as an orthogonal direct sum of Hilbert spaces.

    For every $j$, let $\pi_j : \mH \rightarrow \mH_j$ denote the orthogonal projection, and let $T \in \End_H(\mH)$ be given. Then, the map $\pi_{j_1} \circ T : \mH_{j_2} \rightarrow \mH_{j_1}$ is a homomorphism of $H$-submodules for every $j_1,j_2$. If we assume that (1) holds, then Schur's Lemma implies that $\pi_{j_2} \circ T|_{\mH_{j_1}} = 0$ whenever $j_1 \not= j_2$, and so that $T$ leaves invariant every subspace $\mH_j$. Applying Schur's lemma once more, we conclude that for every $j \in J$ the restriction $T : \mH_j \rightarrow \mH_j$ is a multiple of the identity. This proves that $\End_H(\mH)$ is commutative, thus showing that (1) implies (2).

    On the other hand, if there exists an isomorphism $T_0 : \mH_{j_1} \rightarrow \mH_{j_2}$ of $H$-modules for some $j_1 \not= j_2$, then Schur's lemma implies that the algebra $\End_H(\mH_{j_1}\oplus\mH_{j_2})$ consists of the maps of the form
    \begin{align*}
        \mH_{j_1}\oplus\mH_{j_2} &\rightarrow  \mH_{j_1}\oplus\mH_{j_2} \\
        (u,v) &\mapsto (au + b T_0^{-1}v, c T_0 u + d v),
    \end{align*}
    where $a,b,c,d \in \C$. In particular, $\End_H(\mH_{j_1}\oplus\mH_{j_2})$ is an algebra isomorphic to $M_2(\C)$. Extending by $0$ on $\bigoplus_{j \not= j_1, j_2} \mH_j$ it is clear that we have a natural inclusion
    \[
        \End_H(\mH_{j_1}\oplus\mH_{j_2}) \subset \End_H(\mH)
    \]
    of algebras. Hence, $\End_H(\mH)$ is not commutative. This now proves that (2) implies (1).
\end{proof}

We can improve the previous result to provide an interesting and useful description of the algebra $\End_H(\mH)$. Let us assume the hypotheses of Proposition~\ref{prop:End_H(mH)-commutative} and that its condition (1) holds. Choose $(e_l)_{l \in L}$ an orthonormal basis of $\mH$ for which we have a disjoint union
    \[
        L = \bigcup_{j \in J} L_j
    \]
    so that for every $j \in J$ the set $(e_l)_{l \in L_j}$ is an orthonormal base for $\mH_j$. Let us consider $R : \mH \rightarrow \ell^2(L)$ the isometry defined by
    \[
        R(v) = (\left<v, e_l\right>)_{l \in L},
    \]
    for every $v \in \mH$. Hence, its adjoint clearly satisfies
    \[
        R^*(x) = \sum_{l \in L} x_l e_l,
    \]
    for every $x \in \ell^2(L)$. It follows immediately that the map
    \begin{align*}
        \Phi : \End_H(\mH) &\rightarrow B(\ell^2(L)) \\
                T &\mapsto R T R^*
    \end{align*}
    is an injective homomorphism of algebras. This construction is analogous to the use of a Bargmann type transform as considered in \cite{GKV,GQVJFA,QVBall1,QVReinhardt} and allows us to obtain the following result.

\begin{proposition}
    \label{prop:End_H(mH)-realization}
    Suppose that the hypothesis of Proposition~\ref{prop:End_H(mH)-commutative} are satisfied and that its condition (1) holds. Then, with the above notation, every operator $T \in \End_H(\mH)$ is unitarily equivalent to $\Phi(T) = R T R^*$ which is the multiplication operator on $\ell^2(L)$ given by the function \begin{align*}
        \gamma_T : L &\rightarrow \C \\
            \gamma_T(l) &= \left< T(e_l), e_l \right>.
    \end{align*}
    Furthermore, the function $\gamma_T$ is constant on $L_j$ for every $j \in J$ and so induces a function $\widehat{\gamma}_T : J \rightarrow \C$ that belongs to $\ell^\infty(J)$ and that is given by
    \[
        \widehat{\gamma}_T(j) = \gamma_T(l)
    \]
    whenever $l \in L_j$. In particular, the map $\Phi$ realizes an isomorphism between the algebras $\End_H(\mH)$ and $\ell^\infty(J)$ given by the assignment
    \[
        T \mapsto \widehat{\gamma}_T.
    \]
\end{proposition}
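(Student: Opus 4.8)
The plan is to leverage the structure already established in Proposition~\ref{prop:End_H(mH)-commutative}, whose proof shows that under condition (1) any $T \in \End_H(\mH)$ preserves each irreducible summand $\mH_j$ and acts on it as a scalar multiple of the identity. I would begin by fixing $T \in \End_H(\mH)$ and recording this scalar: for each $j \in J$ there is a constant $c_j \in \C$ with $T|_{\mH_j} = c_j \,\mathrm{id}_{\mH_j}$. The entire proof then reduces to translating this fact through the isometry $R$ and identifying the scalars $c_j$ with the claimed values.

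First I would compute $\Phi(T) = R T R^*$ explicitly on the standard basis of $\ell^2(L)$. Writing $\delta_l$ for the indicator of $l$, one has $R^*(\delta_l) = e_l$, and since $e_l \in \mH_{j}$ for the unique $j$ with $l \in L_j$, the eigenvalue relation gives $T(e_l) = c_j e_l$. Applying $R$ and using $R(e_l) = (\langle e_l, e_{l'}\rangle)_{l'} = \delta_l$ (orthonormality), I get $\Phi(T)\delta_l = c_j \delta_l$. This shows at once that $\Phi(T)$ is diagonal in the standard basis, hence a multiplication operator, with multiplier $\gamma_T(l) = c_j$ for $l \in L_j$. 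To confirm the stated formula $\gamma_T(l) = \langle T(e_l), e_l\rangle$, I simply evaluate $\langle T(e_l), e_l\rangle = \langle c_j e_l, e_l\rangle = c_j$, so the multiplier equals the claimed function, and it is manifestly constant on each $L_j$, inducing the well-defined $\widehat{\gamma}_T : J \to \C$.

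For the algebra isomorphism statement, I would argue that $T \mapsto \widehat{\gamma}_T$ is a homomorphism onto $\ell^\infty(J)$. Injectivity is immediate since the $c_j$ determine $T$ on the dense subspace $V$ and hence on $\mH$. Multiplicativity follows because if $T|_{\mH_j} = c_j\,\mathrm{id}$ and $S|_{\mH_j} = d_j\,\mathrm{id}$, then $(TS)|_{\mH_j} = c_j d_j\,\mathrm{id}$, so $\widehat{\gamma}_{TS} = \widehat{\gamma}_T \,\widehat{\gamma}_S$ pointwise; linearity is equally clear. For surjectivity, given any bounded $(\lambda_j)_{j \in J} \in \ell^\infty(J)$, I would define $T$ to act as $\lambda_j\,\mathrm{id}$ on each $\mH_j$ and extend to the orthogonal direct sum; boundedness with $\|T\| = \sup_j |\lambda_j|$ holds precisely because the $\mH_j$ are mutually orthogonal, and $T$ visibly lies in $\End_H(\mH)$.

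The one point requiring genuine care, rather than routine verification, is the boundedness of $\Phi(T)$ as a multiplication operator, equivalently that $\widehat{\gamma}_T \in \ell^\infty(J)$. This is not automatic from $T$ preserving each summand; it is the bounded operator norm of $T$ that controls the supremum. I would observe that for any $j$ and any unit vector $v \in \mH_j$ we have $|c_j| = \|T v\| \le \|T\|$, so $\sup_{j} |c_j| \le \|T\| < \infty$, giving $\widehat{\gamma}_T \in \ell^\infty(J)$; in fact equality of norms holds, which secures the isometric identification underlying the isomorphism onto $\ell^\infty(J)$.
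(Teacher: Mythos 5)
Your proof is correct and follows essentially the same route as the paper's: both rest on the scalar action of $T$ on each $\mH_j$ coming from Schur's Lemma (via Proposition~\ref{prop:End_H(mH)-commutative}), compute $R T R^*$ against the orthonormal basis $(e_l)$, and prove surjectivity by building the operator $T|_{\mH_j} = \lambda_j \mathrm{id}_{\mH_j}$ from a bounded sequence. Your explicit verification of the norm bound $\sup_j |c_j| \le \|T\|$ and of multiplicativity is a welcome touch of rigor at points the paper dispatches as immediate.
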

\begin{proof}
    First we compute $RTR^*$ as follows. For every $x \in \ell^2(L)$, we have
    \begin{align*}
        \Phi(T)(x) &= RTR^*(x) \\
            &= R T \left( \sum_{l \in L} x_l e_l \right)
                = R \left( \sum_{l \in L} x_l T(e_l) \right) \\
        \intertext{and since $T$ acts by scalar multiplication on each $\mH_j$}
            &= R \left( \sum_{l \in L} x_l \left<T(e_l), e_l\right> e_l \right) \\
            &= \left( \left<T(e_l), e_l\right> x_l \right)_{l \in L}
                = \gamma_T x
    \end{align*}
    where $\gamma_T$ is the function defined in the statement.

    Next, we recall from the proof of Proposition~\ref{prop:End_H(mH)-commutative} that Schur's Lemma implies that $T$ is multiplication by a constant on each $\mH_j$, from which the claim involving the definition of $\widehat{\gamma}_T$ follows.

    On the other hand, for a given $x \in \ell^\infty(J)$ we can define the operator $T$ on $\mH$ by
    \[
        T|_{\mH_j} = x(j)Id_{\mH_j},
    \]
    for every $j \in J$. Then, it is easy to see that $T \in \End_H(\mH)$ and that $\widehat{\gamma}_T = x$. The isomorphism between $\End_H(\mH)$ and $\ell^\infty(J)$ is now clear.
\end{proof}

\begin{remark}
\label{rmk:gamma_T_eigenvalue}
    We note that in the statement above the number $\left< T(e_l), e_l \right>$ is the eigenvalue, say $\lambda_j$, of the action of $T$ on $\mH_j$ when $e_l \in \mH_j$. In particular, such value is the same for all $e_l$ that belong to $\mH_j$. In fact, for every unitary vector $u \in \mH_j$ we have $\left< T(u), u \right> = \lambda_j$, and so this eigenvalue can be computed with any such $u$. This fact will be applied in the proof of Theorem~\ref{thm:Toeplitz_radial}.
\end{remark}

\begin{remark}
\label{rmk:gamma_T_functions}
    With the above notation, every operator $T \in \End_H{\mH}$ is completely determined by either of the following
    \begin{itemize}
        \item the function $\gamma_T \in \ell^\infty(L)$, or
        \item the function $\widehat{\gamma}_T \in \ell^\infty(J)$ and the dimension function
            \begin{align*}
                d : J &\rightarrow \Z_+ \cup \{+\infty\} \\
                d(j) &= \dim \mH_j.
            \end{align*}
    \end{itemize}
    The latter provides the point spectrum and the multiplicity function for such spectrum. Clearly, the spectrum consists of eigenvalues only.
\end{remark}

\subsection{Bergman spaces and Toeplitz operators on the unit ball}
Following the conventions from \cite{Zhu}, for the unit ball $\B^n$ in $\C^n$ we let $\dif v$ denote the Lebesgue measure normalized so that $v(\B^n) = 1$. The usual Lebesgue measure will be denoted by $\dif z$. We will also denote by $\dif \sigma$ the volume element of $S^{2n-1}$ normalized so that $\sigma(S^{2n-1}) = 1$. In particular, we have (see \cite{Zhu})
\begin{align}
\label{eq:spherical}
    \dif v &= 2n r^{2n-1} \dif r \dif\sigma \\
    \dif z &= \frac{\pi^n}{n!} \dif v = \frac{2\pi^n}{(n-1)!} r^{2n-1} \dif r \dif\sigma. \notag
\end{align}
On the other hand, for every $\alpha > -1$ we consider the weighted measure
\[
        \dif v_\alpha(z) = c_\alpha (1 - |z|^2)^\alpha \dif v(z),
\]
where the constant
\[
    c_\alpha = \frac{\Gamma(n + \alpha + 1)}{n! \Gamma(\alpha + 1)} = \frac{1}{n B(n,\alpha+1)}
\]
is chosen so that $v_\alpha(\B^n) = 1$. The weighted Bergman space $\mH^2_\alpha(\B^n)$ is defined as the subspace of holomorphic functions that lie in $L^2(\B^n, v_\alpha)$. This is a closed subspace whose orthogonal projection $B_{\alpha}$ is given as follows
\begin{align*}
    B_{\alpha} : L^2(\B^n, v_\alpha) &\rightarrow \mH^2_\alpha(\B^n) \\
        (B_{\alpha}f)(z) &= \int_{\B^n} f(w)(1 - z \cdot \overline{w})^{-(n + \alpha + 1)} \dif v_\alpha(w),
\end{align*}
where the function
\begin{align*}
    \B^n \times \B^n &\rightarrow \C \\
        (z,w) &\mapsto (1 - z \cdot \overline{w})^{-(n + \alpha + 1)}
\end{align*}
is called the weighted Bergman kernel.

For every $a \in L^\infty(\B^n, \dif z)$ we define the Toeplitz operator $T_a$ on the weighted Bergman space $\mH^2_\alpha(\B^n)$ by
\begin{align*}
    T_a : \mH^2_\alpha(\B^n) &\rightarrow \mH^2_\alpha(\B^n) \\
        T_a f &= B_{\alpha}(af).
\end{align*}
In this case, $a$ is called the symbol of the Toeplitz operator $T_a$. It is easily seen that $T_a$ is a bounded operator with $\|T_a\| \leq \|a\|_\infty$.

On the other hand, the transformations that belong to the connected component of the identity of the biholomorphism group of $\B^n$ are given by the following action
\begin{align}
\label{eq:biholomorphisms}
    \SU(n,1) \times \B^n &\rightarrow \B^n \\
        \left(
        \begin{pmatrix}
            A & b \\
            c & d
        \end{pmatrix}, z
        \right)
        &\mapsto \frac{Az + b}{c\cdot z + d}, \notag
\end{align}
where $z \in \B^n$ is considered as a column, $A$ is an $n \times n$ matrix, $d$ is a complex number and the group $\SU(n,1)$ is defined by
\[
    \SU(n,1) = \{ M \in M_{n+1}(\C) \mid M I_{n,1} \overline{M}^t = I_{n,1} \},
\]
where
\[
    \begin{pmatrix}
        I_n & 0 \\
        0 & -1
    \end{pmatrix}.
\]
We note that the actual connected component of the biholomorphism group of $\B^n$ is the quotient of $\SU(n,1)$ by its center. However, it is easier to use the action of the group $\SU(n,1)$ for our purposes.

The isotropy subgroup of $0 \in \B^n$ for the action \eqref{eq:biholomorphisms} is the subgroup
\[
    \left\{
        \begin{pmatrix}
            A & 0 \\
            0 & b
        \end{pmatrix} \Big| A \in \U(n), b \in \T, \det(A) b = 1
    \right\}.
\]
Alternatively, the isotropy action at $0$ can be realized by the linear action of $\U(n)$ on $\B^n$ given by
\begin{align*}
    \U(n) \times \B^n &\rightarrow \B^n \\
        (A, z) &\mapsto Az,
\end{align*}
where $z$ is again considered as a column.

The subgroup of $\U(n)$ of diagonal matrices has elements of the form
\[
    \begin{pmatrix}
        t_1 & \cdots & 0 \\
        \vdots  & \ddots & \vdots   \\
        0 & \cdots & t_n
    \end{pmatrix},
\]
where $t_j \in \T$ for every $j = 1, \dots, n$. Hence, we will denote such subgroup by $\T^n$. The action of $\T^n$ on $\B^n$ is clearly given by
\begin{align*}
    \T^n \times \B^n &\rightarrow \B^n \\
        (t, z) &\mapsto (t_1 z_1, \dots, t_n z_n).
\end{align*}

The action of $\SU(n,1)$ on $\B^n$ yields actions on the weighted Bergman spaces. More precisely, for every $\alpha > -1$ we have a unitary representation
\begin{align*}
    \pi_{\alpha} : \widetilde{\SU}(n,1) &\rightarrow \U(\mH^2_\alpha(\B^n)) \\
        (\pi_{\alpha}(g)f)(z) &= j(g^{-1}, z)^{\frac{\alpha + n + 1}{n+1}} f(g^{-1} z),
\end{align*}
where $j(g, z)$ is the Jacobian at $z$ of the transformation of $\B^n$ induced by $g \in \widetilde{\SU}(n,1)$ and the lift of the action \eqref{eq:biholomorphisms} to $\widetilde{\SU}(n,1)$. In this construction it is essential to consider the universal covering group $\widetilde{\SU}(n,1)$ to ensure the existence of $j(g,z)^{\frac{\alpha + n + 1}{n + 1}}$ as a holomorphic function of both $g$ and $z$. These unitary representations define the holomorphic relative discrete series for the group $\widetilde{\SU}(n,1)$. It is worthwhile to understand the change of parameter for the discrete series that we made with respect to the representation theoretic notation as found, for example, in \cite{Wallach1}. For the latter, the parameter of the holomorphic discrete series is $\lambda > n$ and it is linearly related to our parameter $\alpha$ by the expression
\[
    \lambda = \alpha + n + 1.
\]

Note that the action of $\U(n)$ on $\B^n$ is volume preserving for the Lebesgue measure $\dif z$. It follows that the Jacobian $j(g,z) = 1$ at every $z \in \B^n$ when $g \in \widetilde{\SU}(n,1)$ projects to an element of $\U(n)$ (through the universal covering map of $\SU(n,1)$). Hence, for every $\alpha > -1$ the representation $\pi_\alpha$ restricts to a representation of $\U(n)$ (not just $\widetilde{\U}(n)$) and such restriction is given by
\[
    (\pi_\alpha(A)f)(z) = f(A^{-1} z),
\]
for every $A \in \U(n)$, $f \in \mH^2_\alpha(\B^n)$ and $z \in \B^n$. Clearly the same holds for the action of the subgroup $\T^n$. For simplicity, we will denote by the same symbol $\pi_\alpha$ these representations for both $\U(n)$ and $\T^n$.

The following result allows us to identify Toeplitz operators that intertwine the representation $\pi_\alpha$ restricted to a subgroup of $\U(n)$. We present the easy proof for the sake of completeness (see also \cite{DOQJFA}).

\begin{proposition}
    \label{prop:Toeplitz_intertwining}
    Let $H$ be closed subgroup of $\U(n)$. If $a \in L^\infty(\B^n,\dif z)$ is $H$-invariant, in other words, if it satisfies
    \[
        a \circ A = a
    \]
    for every $A \in H$, then, for every $\alpha > -1$, we have $T_a \in \End_H(\mH^2_\alpha(\B^n))$.
\end{proposition}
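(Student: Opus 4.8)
The plan is to factor the Toeplitz operator as a composite $T_a = B_\alpha \circ M_a$, where $M_a$ denotes pointwise multiplication by $a$, and then to check separately that each factor intertwines the $H$-action. To set this up I would first extend the representation $\pi_\alpha$ to a unitary representation of $\U(n)$ (and hence of the closed subgroup $H$) on the whole ambient space $L^2(\B^n, v_\alpha)$ by the same formula $(\pi_\alpha(A)f)(z) = f(A^{-1}z)$. Since every $A \in \U(n)$ acts linearly with $|Az| = |z|$, the weighted measure $v_\alpha$ is $\U(n)$-invariant, so this extension is genuinely unitary on $L^2(\B^n, v_\alpha)$ and agrees with the given representation on the holomorphic subspace $\mH^2_\alpha(\B^n)$.

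Next I would verify that the multiplication operator $M_a : L^2(\B^n, v_\alpha) \to L^2(\B^n, v_\alpha)$, which is bounded because $a \in L^\infty(\B^n, \dif z)$, commutes with $\pi_\alpha(A)$ for every $A \in H$. This is an immediate pointwise computation: for $f \in L^2(\B^n, v_\alpha)$ one has $(M_a \pi_\alpha(A) f)(z) = a(z) f(A^{-1}z)$, while $(\pi_\alpha(A) M_a f)(z) = a(A^{-1}z) f(A^{-1}z)$, and these coincide precisely because the hypothesis $a \circ A = a$ yields $a(A^{-1}z) = a(z)$.

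The remaining step is to see that the Bergman projection $B_\alpha$ commutes with $\pi_\alpha(A)$ for every $A \in \U(n)$, and here I would argue structurally rather than through the integral kernel. The operator $\pi_\alpha(A)$ is unitary on $L^2(\B^n, v_\alpha)$ and maps holomorphic functions to holomorphic functions, since precomposition with the linear (hence holomorphic) map $A^{-1}$ preserves holomorphicity; therefore it leaves the closed subspace $\mH^2_\alpha(\B^n)$ invariant. A unitary operator preserving a closed subspace necessarily commutes with the orthogonal projection onto that subspace, so $B_\alpha \pi_\alpha(A) = \pi_\alpha(A) B_\alpha$. Alternatively one can check the $\U(n)$-invariance of the Bergman kernel directly from $Az \cdot \overline{Aw} = z \cdot \overline{w}$, which holds for $A \in \U(n)$.

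Combining the three facts, for $f \in \mH^2_\alpha(\B^n)$ and $A \in H$ I would compute $T_a \pi_\alpha(A) f = B_\alpha M_a \pi_\alpha(A) f = B_\alpha \pi_\alpha(A) M_a f = \pi_\alpha(A) B_\alpha M_a f = \pi_\alpha(A) T_a f$, which is exactly the intertwining identity defining $\End_H(\mH^2_\alpha(\B^n))$. The argument presents no genuine obstacle; the only point requiring a little care is keeping track of domains, namely that $M_a$ is considered on all of $L^2(\B^n, v_\alpha)$ while $T_a$ is the composite $B_\alpha M_a$ restricted to the holomorphic subspace, so that the $\U(n)$-equivariance of $B_\alpha$ on the ambient $L^2$ space may legitimately be invoked in the middle equality.
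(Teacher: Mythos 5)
Your proof is correct, but it is organized along a genuinely different route than the paper's. The paper proves the intertwining identity in a single computation: it writes $T_a(\pi_\alpha(A)f)(z)$ as the Bergman-kernel integral, performs the change of variables $w \mapsto Aw$ (legitimate because $v_\alpha$ is $\U(n)$-invariant), and then uses $a(Aw)=a(w)$ together with $z\cdot\overline{Aw}=(A^{-1}z)\cdot\overline{w}$ to recognize the result as $T_a(f)(A^{-1}z)$. You factor these same three ingredients into separate operator-level statements: the unitary extension of $\pi_\alpha$ to $L^2(\B^n,v_\alpha)$ (measure invariance), the $H$-equivariance of $M_a$ (symbol invariance), and the $\U(n)$-equivariance of $B_\alpha$ (subspace or kernel invariance), then compose. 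Your modular version is more conceptual and makes transparent that the result holds for any group acting unitarily on the ambient $L^2$ space and preserving holomorphy; the paper's computation is shorter and self-contained within the reproducing-kernel formula. One caveat: your structural argument for $B_\alpha$ rests on the claim that a unitary operator mapping a closed subspace into itself commutes with the orthogonal projection onto it, and as stated this is false --- the bilateral shift on $\ell^2(\Z)$ maps $\overline{\mathrm{span}}\{e_n : n\geq 0\}$ into itself yet does not commute with the corresponding projection. What rescues you here is that $\mH^2_\alpha(\B^n)$ is invariant under the entire group, in particular under $\pi_\alpha(A)^{-1}=\pi_\alpha(A)^*$, so the subspace reduces $\pi_\alpha(A)$ and the commutation does follow; you should state the claim in that corrected form. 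Alternatively, the fallback you mention, namely the kernel identity coming from $Az\cdot\overline{Aw}=z\cdot\overline{w}$, is airtight and is essentially the ingredient the paper uses.
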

\begin{proof}
    First we note that, for every $\alpha > -1$, the measure $v_\alpha$ is $\U(n)$-invariant and so $H$-invariant as well. Hence, for every $f \in \mH^2_\alpha(\B^n)$ and $A \in H$ we have
    \begin{align*}
        T_a(A f)(z) &= \int_{\B^n} a(w) f(A^{-1} w) (1 - z \cdot \overline{w})^{-(n + \alpha + 1)} \dif v_\alpha(w) \\
            &= \int_{\B^n} a(A w) f(w) (1 - z \cdot \overline{A w})^{-(n + \alpha + 1)} \dif v_\alpha(w) \\
            &= \int_{\B^n} a(w) f(w) (1 - (A^{-1}z) \cdot \overline{w})^{-(n + \alpha + 1)} \dif v_\alpha(w) \\
            &= T_a(f)(A^{-1} z) = A \circ T_a(f)(z),
    \end{align*}
    thus implying that $T_a \circ A = A \circ T_a$.
\end{proof}

Let us denote by $\mP(\C^n)$ the algebra of polynomial functions on $\C^n$. Since $v_\alpha$ is a probability measure, it follows that $\mP(\C^n) \subset \mH^2_\alpha(\B^n)$ for every $\alpha > -1$. Furthermore, the following result is well known (see for example \cite{Wallach1}) and it will be essential for our constructions.

\begin{proposition}
    \label{prop:Poly_density_invariance}
    The space $\mP(\C^n)$ is dense and $\U(n)$-invariant in $\mH^2_\alpha(\B^n)$ for every $\alpha > -1$.
\end{proposition}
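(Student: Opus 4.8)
The plan is to dispatch $\U(n)$-invariance immediately and then devote the work to density, which I reduce to proving that the monomials $\{z^\beta : \beta \in \N^n\}$ form a complete orthogonal system in $\mH^2_\alpha(\B^n)$. For invariance, recall from the formula $(\pi_\alpha(A)f)(z) = f(A^{-1}z)$ that if $p \in \mP(\C^n)$, then $\pi_\alpha(A)p$ is the composition of $p$ with the linear map $z \mapsto A^{-1}z$, which is again a polynomial of the same degree; hence $\mP(\C^n)$ is $\U(n)$-invariant. Thus the whole content lies in density.

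First I would establish that distinct monomials are orthogonal. Since $v_\alpha$ is a probability measure and each $z^\beta$ is bounded on $\B^n$, every monomial lies in $\mH^2_\alpha(\B^n)$ with finite, strictly positive norm. For orthogonality I would use the diagonal action of $\T^n$: writing $\langle z^\beta, z^\gamma\rangle_\alpha = c_\alpha \int_{\B^n} z^\beta \overline{z^\gamma}(1-|z|^2)^\alpha \dif v(z)$ and replacing $z$ by $(t_1 z_1, \dots, t_n z_n)$ with $t \in \T^n$ leaves the integral unchanged because $v_\alpha$ is $\T^n$-invariant, while the integrand is multiplied by $\prod_j t_j^{\beta_j - \gamma_j}$. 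Averaging over $t \in \T^n$ forces the inner product to vanish unless $\beta = \gamma$. Hence $\{z^\beta\}_{\beta \in \N^n}$ is an orthogonal system.

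It remains to prove completeness, which is the heart of the matter. Suppose $f \in \mH^2_\alpha(\B^n)$ satisfies $\langle f, z^\beta\rangle_\alpha = 0$ for every $\beta$. Being holomorphic on $\B^n$, $f$ admits a homogeneous expansion $f = \sum_{k \geq 0} p_k$ with $p_k$ homogeneous of degree $k$, converging uniformly on compact subsets, in particular uniformly on each sphere of radius $r < 1$. Passing to spherical coordinates via \eqref{eq:spherical} and fixing $|\beta| = m$, I would write
\[
    \langle f, z^\beta\rangle_\alpha = 2n\, c_\alpha \int_0^1 \left( \int_{S^{2n-1}} f(r\zeta)\,\overline{\zeta^\beta}\,\dif\sigma(\zeta) \right) r^{2n-1+m}(1-r^2)^\alpha\, \dif r.
\]
The uniform convergence on the sphere lets me integrate the expansion term by term; since the central circle $\zeta \mapsto e^{i\theta}\zeta$ multiplies $p_k(\zeta)\overline{\zeta^\beta}$ by $e^{i(k-m)\theta}$, only the $k = m$ term survives the spherical integral, leaving a strictly positive constant times $\int_{S^{2n-1}} p_m(\zeta)\overline{\zeta^\beta}\,\dif\sigma(\zeta)$. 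Thus the hypothesis gives $p_m \perp z^\beta$ on the sphere for every $\beta$ with $|\beta| = m$; as these monomials span the homogeneous polynomials of degree $m$, we get $p_m = 0$ for all $m$, so $f = 0$. Therefore the orthogonal system $\{z^\beta\}$ is complete, and $\mP(\C^n)$ is dense.

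I expect the main obstacle to be the rigorous justification of the term-by-term integration together with the passage to spherical coordinates, that is, controlling the interchange of the infinite homogeneous sum with the integral against the weight. This is resolved by exploiting holomorphy: the homogeneous expansion converges uniformly on every compact subset of $\B^n$, hence uniformly on each fixed sphere $\{|z| = r\}$, which legitimizes term-by-term integration over $S^{2n-1}$ for each fixed $r$; the remaining integral against the finite weight $r^{2n-1+m}(1-r^2)^\alpha$ is then handled by dominated convergence.
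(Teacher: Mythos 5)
Your proof is correct, but note that the paper does not actually prove this proposition at all: it is stated as a well-known fact with a citation to Wallach's paper \cite{Wallach1}, so any complete argument you give is necessarily a different route. Your argument is the standard self-contained one (essentially the proof found in Zhu's book): invariance is immediate since precomposition with a linear map preserves polynomials, orthogonality of monomials follows from averaging over the $\T^n$-action, and completeness follows from the homogeneous expansion of a holomorphic function together with the circle action on the sphere, which isolates each homogeneous component $p_m$ and forces it to vanish. All the steps check out: the passage to spherical coordinates is legitimate by Fubini since $f\,\overline{z^\beta}(1-|z|^2)^\alpha \in L^1(\B^n, \dif v)$, the term-by-term spherical integration is justified by locally uniform convergence of the homogeneous expansion, and once the sum collapses to the single term $k=m$ the radial integral is elementary (your final worry about dominated convergence is actually unnecessary, since after the spherical integration no infinite sum remains). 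What the paper's citation buys is brevity; what your proof buys is self-containedness using only ingredients already in the paper (the formula \eqref{eq:spherical} and unitarity of the torus action), and it is in the same representation-theoretic spirit as the rest of the paper, since both the orthogonality and completeness steps are really instances of the $\T^n$- and circle-equivariance arguments used in Sections~\ref{sec:Tn_intertwining} and \ref{sec:U(n)_intertwining}.
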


\section{$\T^n$-intertwining operators}\label{sec:Tn_intertwining}
Let us consider the algebra $\End_{\T^n}(\mH^2_\alpha(\B^n))$ of bounded operators on $\mH^2_\alpha(\B^n)$ that intertwine the representation of $\T^n$. Our goal is to establish the commutativity of such algebra by realizing it as an algebra of multiplication operators. Our main tool is the following well known result, whose proof we include for completeness. In what follows we will use without further mention the multi-index notation for polynomials (see \cite{Zhu}).

\begin{proposition}
    \label{prop:Poly_Tn}
    The decomposition of $\mP(\C^n)$ into irreducible $\T^n$-modules is given by
    \[
        \mP(\C^n) = \sum_{m \in \N^n} \C z^m.
    \]
    More precisely, for every $m \in \N^n$, the space $\C z^m$ is an irreducible $\T^n$-submodule, and we also have $\C z^m \not\cong \C z^{m'}$ as $\T^n$-modules and $\C z^m \perp \C z^{m'}$ whenever $m \not= m'$. In particular, for every $\alpha > -1$ we have
    \[
        \mH^2_\alpha(\B^n) = \bigoplus_{m \in \N^n} \C z^m
    \]
    as an orthogonal direct sum of Hilbert spaces that yields the decomposition of $\mH^2_\alpha(\B^n)$ into irreducible $\T^n$-modules.
\end{proposition}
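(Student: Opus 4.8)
The plan is to reduce everything to the elementary representation theory of the torus $\T^n$, whose characters are exactly the maps $t \mapsto t^k$ for $k \in \Z^n$. First I would compute the action of $\T^n$ on a monomial. Using the formula $(\pi_\alpha(t)f)(z) = f(t^{-1}z)$ together with the coordinatewise action $t^{-1}z = (t_1^{-1}z_1, \dots, t_n^{-1}z_n)$, one finds
\[
    \pi_\alpha(t) z^m = (t^{-1}z)^m = t^{-m} z^m,
\]
so that each line $\C z^m$ is a $\T^n$-invariant subspace on which $\T^n$ acts through the character $\chi_m(t) = t^{-m}$. Since $\C z^m$ is one-dimensional it is automatically irreducible, which settles the irreducibility claim.

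Next I would address the non-isomorphism and the orthogonality. Because the assignment $m \mapsto -m$ is injective from $\N^n$ into $\Z^n$, the characters $\chi_m$ are pairwise distinct; hence $\C z^m \not\cong \C z^{m'}$ as $\T^n$-modules whenever $m \neq m'$. For orthogonality I would exploit that $\pi_\alpha$ is a \emph{unitary} representation, so that the inner product of $\mH^2_\alpha(\B^n)$ is $\T^n$-invariant. For $m \neq m'$ this gives, for every $t \in \T^n$,
\[
    \left<z^m, z^{m'}\right>
    = \left<\pi_\alpha(t) z^m, \pi_\alpha(t) z^{m'}\right>
    = t^{-m}\,\overline{t^{-m'}}\,\left<z^m, z^{m'}\right>
    = t^{m'-m}\,\left<z^m, z^{m'}\right>,
\]
using $\overline{t_j^{-1}} = t_j$. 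Choosing $t$ with $t^{m'-m}\neq 1$, which exists precisely because $m\neq m'$, forces $\left<z^m, z^{m'}\right> = 0$. This is the step I expect to be the delicate one: it is where the representation-theoretic viewpoint does the real work, replacing the explicit polar-coordinate computation of $\int_{\B^n} z^m\,\overline{z^{m'}}\,\dif v_\alpha$ by a one-line invariance argument.

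Finally I would assemble the global statement. Algebraically, $\mP(\C^n) = \sum_{m\in\N^n}\C z^m$ is just the fact that the monomials form a basis of the polynomial algebra, and the previous step shows this sum is orthogonal, yielding the decomposition of $\mP(\C^n)$ into pairwise non-isomorphic irreducible $\T^n$-modules. To pass to $\mH^2_\alpha(\B^n)$ I would invoke Proposition~\ref{prop:Poly_density_invariance}, which guarantees that $\mP(\C^n)$ is dense; together with the orthogonality just established, this shows that the normalized monomials form an orthonormal basis, so that
\[
    \mH^2_\alpha(\B^n) = \bigoplus_{m\in\N^n}\C z^m
\]
as an orthogonal direct sum of Hilbert spaces, which is the claimed decomposition into irreducible $\T^n$-modules.
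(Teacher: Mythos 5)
Your proof is correct, and on irreducibility and non-isomorphism it follows the paper's argument essentially verbatim: both compute $t \cdot z^m = t^{-m} z^m$, note that a one-dimensional invariant subspace is automatically irreducible, and distinguish the modules $\C z^m$ by their characters $\chi_m(t) = t^{-m}$, which are pairwise distinct for distinct $m$. The one place where you genuinely diverge is the orthogonality of the monomials: the paper dismisses this as well known, citing \cite{Zhu} (where it rests on the explicit polar-coordinate integral), whereas you derive it intrinsically from the unitarity of $\pi_\alpha$ restricted to $\T^n$ --- the inner product is $\T^n$-invariant, so for $m \neq m'$ one gets $\left< z^m, z^{m'} \right>_\alpha = t^{m'-m} \left< z^m, z^{m'} \right>_\alpha$ for all $t$, and choosing $t$ with $t^{m'-m} \neq 1$ forces the inner product to vanish. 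This argument is sound (unitarity is available in the paper, since the $\T^n$-action preserves $v_\alpha$ and has Jacobian $1$), and it is in fact the same mechanism the paper itself deploys later in Proposition~\ref{prop:Poly_U(n)}, where orthogonality of the spaces $\mP^k(\C^n)$ is deduced from mutual non-isomorphism together with unitarity of the $\U(n)$-representation. So your version buys self-containedness and is arguably more faithful to the paper's stated representation-theoretic program, at the cost of a few extra lines; the paper's citation buys brevity. Your final assembly --- monomials span $\mP(\C^n)$ algebraically, then density via Proposition~\ref{prop:Poly_density_invariance} upgrades the orthogonal algebraic sum to the Hilbert space direct sum $\mH^2_\alpha(\B^n) = \bigoplus_{m \in \N^n} \C z^m$ --- matches the paper's.
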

\begin{proof}
    The first sum in the statement holds trivially and since every space $\C z^m$, for $m \in \N^n$, is $1$-dimensional and $\T^n$-invariant such sum yields a decomposition into irreducible $\T^n$-modules. The orthogonality of these $1$-dimensional subspaces is well known (see \cite{Zhu}). It remains to prove that $\C z^m \not\cong \C z^{m'}$ as $\T^n$-modules when $m \not= m'$.

    For every $t \in \T^n$ and $m \in \N^n$ we have
    \[
        t\cdot z^m = (t^{-1} z)^m = t_1^{-m_1}\cdot \ldots \cdot t_n^{-m_n} z^m,
    \]
    and so $\C z^m$ is an irreducible $\T^n$-module with character given by
    \[
        \chi_m(t) = t_1^{-m_1}\cdot \ldots \cdot t_n^{-m_n}.
    \]
    Since the isomorphism class of an irreducible $\T^n$-module is determined by its character, the claim that $\C z^m \not\cong \C z^{m'}$ as $\T^n$-modules when $m \not= m'$ is now clear.
\end{proof}

Proposition~\ref{prop:Poly_Tn} allows us to apply the results from Section~\ref{sec:Preliminaries}. We do so by choosing, for every $m \in \N^n$, the monomial
\[
    f_m(z) = z^m.
\]
We recall (see \cite{Zhu}) that for every $\alpha > -1$ we have
\[
    \|f_m\|^2_\alpha = \frac{m! \Gamma(n + \alpha + 1)}{\Gamma(n + |m| + \alpha + 1)},
\]
which yields the following well known orthonormal basis for $\mH^2_\alpha(\B^n)$
\[
    \left(
        e_m(z) =  \sqrt{\frac{\Gamma(n + |m| + \alpha + 1)}{m! \Gamma(n + \alpha + 1)}} z^m
    \right)_{m \in \N^n}.
\]
With this choice of polynomials, Propositions~\ref{prop:End_H(mH)-commutative}, \ref{prop:End_H(mH)-realization} and \ref{prop:Poly_Tn} yield the following result.

\begin{theorem}
    \label{thm:End_Tn}
    For every $\alpha > -1$, the algebra $\End_{\T^n}(\mH^2_\alpha(\B^n))$ is commutative. More precisely, with the above notation and for the unitary map
    \begin{align*}
        R : \mH^2_\alpha(\B^n) &\rightarrow \ell^2(\N^n) \\
            R(f) &= \left( \left< f, e_m\right>_\alpha\right)_{m \in \N^n},
    \end{align*}
    every operator $T \in \End_{\T^n}(\mH^2_\alpha(\B^n))$ is unitarily equivalent to $R T R^*$ which is the multiplication operator on $\ell^2(\N^n)$ by the function
    \begin{align*}
        \gamma_T : \N^n &\rightarrow \C \\
            \gamma_T(m) &= \left< T(e_m), e_m \right>_\alpha.
    \end{align*}
\end{theorem}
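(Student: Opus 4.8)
The plan is to recognize that Theorem~\ref{thm:End_Tn} is essentially an immediate corollary of the abstract machinery already developed in Section~\ref{sec:Preliminaries}, once the correct identifications are made. The whole strategy is verification rather than discovery: I need to check that the concrete setup here—the group $\T^n$, the Hilbert space $\mH^2_\alpha(\B^n)$, and the basis $(e_m)_{m \in \N^n}$—satisfies the hypotheses of Proposition~\ref{prop:End_H(mH)-commutative} and Proposition~\ref{prop:End_H(mH)-realization}, and then simply transcribe their conclusions into the present notation.

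First I would invoke Proposition~\ref{prop:Poly_Tn}, which supplies exactly the algebraic decomposition required by the hypothesis of Proposition~\ref{prop:End_H(mH)-commutative}: taking $H = \T^n$, $\mH = \mH^2_\alpha(\B^n)$, the index set $J = \N^n$, and $\mH_m = \C z^m$, the spaces $\C z^m$ are closed, mutually orthogonal, irreducible $\T^n$-submodules whose algebraic sum $\mP(\C^n)$ is dense in $\mH^2_\alpha(\B^n)$ by Proposition~\ref{prop:Poly_density_invariance}. Moreover, Proposition~\ref{prop:Poly_Tn} establishes that $\C z^m \not\cong \C z^{m'}$ as $\T^n$-modules for $m \neq m'$, which is precisely condition~(1) of Proposition~\ref{prop:End_H(mH)-commutative}. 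The equivalence of that proposition therefore yields condition~(2): the algebra $\End_{\T^n}(\mH^2_\alpha(\B^n))$ is commutative, giving the first assertion of the theorem.

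For the second assertion I would apply Proposition~\ref{prop:End_H(mH)-realization}, whose hypotheses are now verified. The only genuine task is to match the generic orthonormal basis $(e_l)_{l \in L}$ of the abstract statement with the concrete basis $(e_m)_{m \in \N^n}$ defined just above the theorem. Here the index set $L = \N^n$ decomposes as the disjoint union $L = \bigcup_{m \in \N^n} L_m$ where each $L_m = \{m\}$ is a singleton, since each irreducible module $\C z^m$ is one-dimensional and $(e_m)$ is a unit basis vector for it. With this identification $\ell^2(L) = \ell^2(\N^n)$, the isometry $R$ of Proposition~\ref{prop:End_H(mH)-realization} becomes exactly the map $R(f) = (\langle f, e_m\rangle_\alpha)_{m \in \N^n}$, and its conclusion states that every $T \in \End_{\T^n}(\mH^2_\alpha(\B^n))$ is unitarily equivalent to the multiplication operator $R T R^*$ by the function $\gamma_T(m) = \langle T(e_m), e_m\rangle_\alpha$, which is the stated formula.

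There is, strictly speaking, no main obstacle—the theorem is a packaging result, and the analytic input (the norm $\|f_m\|_\alpha^2$, orthogonality of monomials, density of $\mP(\C^n)$) has already been cited from \cite{Zhu}. The only point requiring a word of care is the degeneracy $L_m = \{m\}$: because the modules are one-dimensional, the $\widehat{\gamma}_T$-versus-$\gamma_T$ distinction of Proposition~\ref{prop:End_H(mH)-realization} collapses, so $\gamma_T$ is trivially constant on each $L_m$ and one may state the result directly in terms of $\gamma_T$ on $\N^n$ without passing to a separate $\widehat{\gamma}_T$. I would simply note that the hypotheses of Propositions~\ref{prop:End_H(mH)-commutative} and~\ref{prop:End_H(mH)-realization} hold by Proposition~\ref{prop:Poly_Tn}, and that the asserted conclusions follow upon specializing $L = \N^n$, $L_m = \{m\}$, and $e_l = e_m$.
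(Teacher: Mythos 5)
Your proposal is correct and follows exactly the paper's own route: the paper derives Theorem~\ref{thm:End_Tn} precisely by combining Propositions~\ref{prop:End_H(mH)-commutative}, \ref{prop:End_H(mH)-realization} and \ref{prop:Poly_Tn} with the orthonormal basis $(e_m)_{m \in \N^n}$, which is the verification you carry out. Your extra remark that each $L_m$ is a singleton (so $\gamma_T$ and $\widehat{\gamma}_T$ coincide) is a correct and harmless clarification of the same argument.
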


As a consequence of Proposition~\ref{prop:End_H(mH)-realization} and Theorem~\ref{thm:End_Tn} we obtain the following result.

\begin{corollary}
    \label{cor:End_Tn}
    With the above notation, the assignment
    \[
        T \mapsto \gamma_T
    \]
    defines an isomorphism of algebras $\End_{\T^n}(\mH^2_\alpha(\B^n)) \rightarrow \ell^\infty(\N^n)$.
\end{corollary}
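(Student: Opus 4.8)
The plan is to recognize that this corollary is an immediate specialization of Proposition~\ref{prop:End_H(mH)-realization} to the concrete setting established by Proposition~\ref{prop:Poly_Tn}. First I would record that the hypotheses of Proposition~\ref{prop:End_H(mH)-commutative} are met with $H = \T^n$, $\mH = \mH^2_\alpha(\B^n)$, index set $J = \N^n$, and irreducible components $\mH_m = \C z^m$, and that its condition (1) holds because the characters $\chi_m$ are pairwise distinct for distinct multi-indices. Consequently Proposition~\ref{prop:End_H(mH)-realization} applies directly and already delivers an algebra isomorphism $\End_{\T^n}(\mH^2_\alpha(\B^n)) \to \ell^\infty(J)$ given by $T \mapsto \widehat{\gamma}_T$.

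The single point I would make explicit is that the function $\gamma_T$ appearing in Theorem~\ref{thm:End_Tn} coincides with the function $\widehat{\gamma}_T$ of Proposition~\ref{prop:End_H(mH)-realization}. This is forced by the fact that each irreducible component $\C z^m$ is one-dimensional, so the orthonormal basis $(e_m)_{m \in \N^n}$ contains exactly one vector per component. In the notation of the realization proposition this means $L = J = \N^n$ and each $L_m = \{m\}$ is a singleton, so the assertion that $\gamma_T$ is constant on $L_m$ is vacuous and the induced function $\widehat{\gamma}_T$ on $J = \N^n$ is literally equal to $\gamma_T$ on $L = \N^n$.

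With this identification in place, the conclusion is immediate: the assignment $T \mapsto \gamma_T = \widehat{\gamma}_T$ is precisely the algebra isomorphism onto $\ell^\infty(\N^n)$ furnished by Proposition~\ref{prop:End_H(mH)-realization}. I do not anticipate any genuine obstacle, since the entire content of the corollary is already contained in the general framework of Section~\ref{sec:Preliminaries}; the only thing to verify is the harmless bookkeeping that the one-dimensionality of the $\T^n$-isotypic components collapses the two index sets $L$ and $J$ into a single copy of $\N^n$.
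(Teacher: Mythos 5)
Your proposal is correct and follows exactly the route the paper intends: the corollary is stated there as an immediate consequence of Proposition~\ref{prop:End_H(mH)-realization} and Theorem~\ref{thm:End_Tn}, which is precisely your specialization with $H = \T^n$, $J = \N^n$, and one-dimensional components $\C z^m$. Your explicit remark that the one-dimensionality of each $\C z^m$ collapses $L$ and $J$ so that $\gamma_T = \widehat{\gamma}_T$ is the only bookkeeping the paper leaves implicit, and you have it right.
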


\section{$\U(n)$-intertwining operators}\label{sec:U(n)_intertwining}
Let us now consider the algebra $\End_{\U(n)}(\mH^2_\alpha(\B^n))$ of bounded operators on $\mH^2_\alpha(\B^n)$ that intertwine the representation of $\U(n)$. This time, the section's goal is to prove the commutativity of this algebra by realizing it as an algebra of multiplication operators. The main ingredient to achieve this is the following well known result. Again, we present the proof for the sake of completeness.

\begin{proposition}
    \label{prop:Poly_U(n)}
    Let us denote by $\mP^k(\C^n)$ the space of homogeneous polynomials on $\C^n$ of degree $k$. Then, the decomposition of $\mP(\C^n)$ into irreducible $\U(n)$-modules is given by
    \[
        \mP(\C^n) = \sum_{k \in \N} \mP^k(\C^n).
    \]
    More precisely, for every $k \in \N$, the space $\mP^k(\C^n)$ is an irreducible $\U(n)$-submodule, and we also have $\mP^k(\C^n) \not\cong \mP^l(\C^n)$ as $\U(n)$-modules and $\mP^k(\C^n) \perp \mP^l(\C^n)$ whenever $k \not= l$. In particular, for every $\alpha > -1$ we have
    \[
        \mH^2_\alpha(\B^n) = \bigoplus_{k \in \N} \mP^k(\C^n)
    \]
    as an orthogonal direct sum of Hilbert spaces that yields the decomposition of $\mH^2_\alpha(\B^n)$ into irreducible $\U(n)$-modules.
\end{proposition}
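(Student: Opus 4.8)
The plan is to prove Proposition~\ref{prop:Poly_U(n)} by reducing everything to three facts: the algebraic decomposition $\mP(\C^n) = \bigoplus_k \mP^k(\C^n)$, the irreducibility of each $\mP^k(\C^n)$ as a $\U(n)$-module, and the pairwise non-isomorphism together with orthogonality. The first fact is immediate and essentially definitional: every polynomial is uniquely a finite sum of its homogeneous components, so the sum is direct as vector spaces.

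First I would establish irreducibility. The cleanest route, as the introduction hints via Schur-Weyl duality ``in its simplest form,'' is to observe that the $\U(n)$-action on $\mP^k(\C^n)$ extends to a holomorphic action of $\GL(n,\C)$ (by $(g \cdot p)(z) = p(g^{-1} z)$), and that a subspace is $\U(n)$-invariant if and only if it is $\GL(n,\C)$-invariant, since $\U(n)$ is a real form of $\GL(n,\C)$ and invariance is a polynomial condition that propagates from the compact real form to its complexification. It then suffices to show $\mP^k(\C^n)$ is $\GL(n,\C)$-irreducible. This is the standard statement that the symmetric power $S^k(\C^n)^*$ is an irreducible representation of $\GL(n,\C)$, which one can cite as well known or prove by exhibiting a highest weight vector (the monomial $z_1^k$) that generates the whole space under the action of the unipotent and torus subgroups.

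Next I would handle non-isomorphism and orthogonality. Non-isomorphism is the easy part: the restriction of $\pi_\alpha$ to the center $\T \cdot I_n \subset \U(n)$ acts on $\mP^k(\C^n)$ by the scalar character $t \mapsto t^{-k}$ (since $p(t^{-1} z) = t^{-k} p(z)$ for homogeneous $p$ of degree $k$), and these central characters are distinct for distinct $k$, so no nonzero intertwiner can exist between $\mP^k$ and $\mP^l$ when $k \neq l$. Orthogonality follows either from the same central-character argument applied to unitarity (eigenspaces of a unitary operator for distinct eigenvalues are orthogonal) or directly from the known orthogonality of distinct monomials $z^m$ recorded earlier, since monomials of different total degree are orthogonal in $\mH^2_\alpha(\B^n)$.

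Finally I would assemble the Hilbert space decomposition: combining density of $\mP(\C^n)$ in $\mH^2_\alpha(\B^n)$ (Proposition~\ref{prop:Poly_density_invariance}) with the orthogonality of the homogeneous components gives the orthogonal Hilbert space direct sum $\mH^2_\alpha(\B^n) = \bigoplus_k \mP^k(\C^n)$, completing the statement. The main obstacle I anticipate is the irreducibility step: proving $\GL(n,\C)$-irreducibility of $\mP^k(\C^n)$ rigorously requires either invoking the symmetric-power representation theory as a black box or spelling out the highest-weight argument, and one must also justify carefully the passage between $\U(n)$-invariance and $\GL(n,\C)$-invariance. Everything else—the algebraic direct sum, the central-character computation, and orthogonality—is routine.
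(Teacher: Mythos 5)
Your proposal is correct, and its core is the same as the paper's: both extend the $\U(n)$-action on $\mP^k(\C^n)$ to $\GL(n,\C)$, transfer invariance of subspaces between the two groups (the paper makes your ``polynomial condition propagates to the complexification'' remark precise via Zariski density of $\U(n)$ in $\GL(n,\C)$ together with rationality of the representation), and then invoke irreducibility of $\mP^k(\C^n)$ as a $\GL(n,\C)$-module --- the paper by citing Schur--Weyl duality (Theorem~5.6.7 in Goodman--Wallach), you by citing the symmetric-power statement or sketching a highest-weight argument; these are the same fact, and your anticipated ``main obstacle'' is exactly what the paper disposes of by citation. The one genuine divergence is the non-isomorphism step: the paper argues that the spaces $\mP^k(\C^n)$ are pairwise non-isomorphic because they have different dimensions, whereas you use the central character, noting that $tI_n$ acts on $\mP^k(\C^n)$ by the scalar $t^{-k}$. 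Your version is actually the more robust one: for $n=1$ every $\mP^k(\C)$ is one-dimensional, so the paper's dimension count is vacuous there, while distinct central characters exclude an isomorphism for all $n \geq 1$; moreover the same central-character computation gives orthogonality for free, since eigenspaces of the unitary operator $\pi_\alpha(tI_n)$ with distinct eigenvalues are orthogonal --- this replaces the paper's appeal to the general fact that non-isomorphic irreducible submodules of a unitary representation are orthogonal. The final assembly (density of $\mP(\C^n)$ from Proposition~\ref{prop:Poly_density_invariance} plus orthogonality yielding $\mH^2_\alpha(\B^n) = \bigoplus_{k} \mP^k(\C^n)$) is identical in both.
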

\begin{proof}
    Consider the general linear group on $\C^n$ which is denoted by $\GL(n,\C)$. We recall that $\U(n)$ is the set of real points for a suitable algebraic structure of $\GL(n, \C)$ over $\R$. In particular, $\U(n)$ is Zariski dense in $\GL(n,\C)$. The group $\GL(n,\C)$ acts by the following expression on the space of complex polynomials
    \begin{align*}
        \GL(n,\C) \times \mP(\C^n) &\rightarrow \mP(\C^n) \\
            (Af)(z) &= f(A^{-1}z),
    \end{align*}
    whose restriction to $\U(n)$ is precisely the representation $\pi_\alpha$ of $\U(n)$ on polynomial functions. Clearly, this $\GL(n,\C)$-action leaves invariant every subspace $\mP^k(\C^n)$. Furthermore, the representation of $\GL(n,\C)$ on the (finite dimensional) space $\mP^k(\C^n)$ is rational in the sense of algebraic groups (see \cite{GoodmanWallach}). Thus, the Zariski density of $\U(n)$ in $\GL(n,\C)$ implies that a subspace of $\mP^k(\C^n)$ is $\GL(n,\C)$-invariant if and only if it is $\U(n)$-invariant. In particular, the decompositions into irreducible submodules of each $\mP^k(\C^n)$, and so of $\mP(\C^n)$, with respect to either $\GL(n,\C)$ or $\U(n)$ are the same.

    On the other hand, one can use the Schur-Weyl duality of $\GL(n,\C)\times \C^*$ to determine the decomposition of each $\mP^k(\C^n)$ into irreducible submodules. Applying the results in Section~5.6.2 from \cite{GoodmanWallach} (see for example Theorem~5.6.7 of this reference) it follows that $\mP^k(\C^n)$ is an irreducible $\GL(n,\C)$-module. That these irreducible modules are mutually non-isomorphic follows from the fact that they all have different dimension.

    Next, we note that the orthogonality of the subspaces $\mP^k(\C^n)$ follows since these are irreducible mutually non-isomorphic and the $\U(n)$-representation is unitary.
\end{proof}

Proposition~\ref{prop:Poly_U(n)} allows us to apply the results from Section~\ref{sec:Preliminaries}. We will consider, for every $\alpha > -1$, the same Hilbert base $(e_m)_{m \in \N^n}$ defined in Section~\ref{sec:Tn_intertwining}. Hence, Propositions~\ref{prop:End_H(mH)-commutative}, \ref{prop:End_H(mH)-realization} and \ref{prop:Poly_U(n)} yield the following result.

\begin{theorem}
    \label{thm:End_U(n)}
    For every $\alpha > -1$, the algebra $\End_{\U(n)}(\mH^2_\alpha(\B^n))$ is commutative. More precisely, for the above notation and for the unitary map
    \begin{align*}
        R : \mH^2_\alpha(\B^n) &\rightarrow \ell^2(\N^n) \\
            R(f) &= \left( \left< f, e_m \right>_\alpha \right)_{m \in \N^n},
    \end{align*}
    every operator $T \in \End_{\U(n)}(\mH^2_\alpha(\B^n))$ is unitarily equivalent to $R T R^*$ which is the multiplication operator on $\ell^2(\N^n)$ by the function
    \begin{align*}
        \gamma_T : \N^n &\rightarrow \C \\
            \gamma_T(m) &= \left< T(e_m), e_m \right>_\alpha.
    \end{align*}
    Furthermore, let us choose for every $k \in \N$ a unitary vector $u_k \in \mP^k(\C^n)$ and consider the function
    \begin{align*}
        \widehat{\gamma}_T : \N &\rightarrow \C \\
            \widehat{\gamma}_T(k) &= \left< T(u_k), u_k \right>_\alpha.
    \end{align*}
    Then, we have
    \[
        \widehat{\gamma}_T(|m|) = \gamma_T(m),
    \]
    for every $m \in \N^n$. In particular, $\gamma_T(m) = \gamma_T(m')$ whenever $m, m' \in \N^n$ and $|m| = |m'|$.
\end{theorem}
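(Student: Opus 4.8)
The plan is to apply Propositions~\ref{prop:End_H(mH)-commutative} and \ref{prop:End_H(mH)-realization} directly, using the orthogonal decomposition of $\mH^2_\alpha(\B^n)$ into irreducible $\U(n)$-modules supplied by Proposition~\ref{prop:Poly_U(n)}. Here the index set is $J = \N$, with $\mH_k = \mP^k(\C^n)$, and the orthonormal basis $(e_m)_{m \in \N^n}$ splits along the disjoint union $\N^n = \bigcup_{k \in \N} L_k$ where $L_k = \{ m \in \N^n \mid |m| = k \}$, since $e_m$ is a scalar multiple of $z^m$ and $z^m$ is homogeneous of degree $|m|$. Thus $(e_m)_{m \in L_k}$ is an orthonormal basis of $\mP^k(\C^n)$, placing us squarely in the framework of Section~\ref{sec:Preliminaries}.

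First I would invoke Proposition~\ref{prop:Poly_U(n)} to verify that condition~(1) of Proposition~\ref{prop:End_H(mH)-commutative} holds: the modules $\mP^k(\C^n)$ are mutually non-isomorphic (they have distinct dimensions), so $\End_{\U(n)}(\mH^2_\alpha(\B^n))$ is commutative, which is the first assertion. The unitary equivalence of each $T$ with the multiplication operator by $\gamma_T(m) = \left< T(e_m), e_m \right>_\alpha$ on $\ell^2(\N^n)$ is then immediate from Proposition~\ref{prop:End_H(mH)-realization}, applied verbatim with $L = \N^n$.

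For the remaining claim, the key point is that Proposition~\ref{prop:End_H(mH)-realization} already guarantees $\gamma_T$ is constant on each $L_k$. Concretely, by Schur's Lemma (as recorded in the proof of Proposition~\ref{prop:End_H(mH)-commutative}), $T$ acts on $\mP^k(\C^n)$ as multiplication by a single scalar $\lambda_k$, and Remark~\ref{rmk:gamma_T_eigenvalue} tells us that this eigenvalue is computed as $\left< T(u), u \right>_\alpha$ for \emph{any} unit vector $u \in \mP^k(\C^n)$. Hence for a chosen unit vector $u_k \in \mP^k(\C^n)$ we have $\widehat{\gamma}_T(k) = \left< T(u_k), u_k \right>_\alpha = \lambda_k$, and likewise for any $m$ with $|m| = k$ the unit vector $e_m$ gives $\gamma_T(m) = \left< T(e_m), e_m \right>_\alpha = \lambda_k$. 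Setting $k = |m|$ yields $\widehat{\gamma}_T(|m|) = \gamma_T(m)$, and the final equality $\gamma_T(m) = \gamma_T(m')$ for $|m| = |m'|$ follows since both equal $\lambda_{|m|}$.

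I do not anticipate a genuine obstacle, since all the machinery has been built in Section~\ref{sec:Preliminaries}; the proof is essentially a bookkeeping translation. The only point deserving care is the identification of the index partition $L_k = \{ m : |m| = k \}$ with the module decomposition, i.e.\ confirming that $e_m \in \mP^{|m|}(\C^n)$, which is transparent from the homogeneity of the monomial $z^m$. The use of Remark~\ref{rmk:gamma_T_eigenvalue} is what cleanly justifies computing the common eigenvalue with the arbitrary unit vectors $u_k$ rather than with basis elements, so that $\widehat{\gamma}_T$ is well defined independently of the choice of $u_k$.
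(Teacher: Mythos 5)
Your proposal is correct and follows essentially the same route as the paper: both apply Propositions~\ref{prop:End_H(mH)-commutative} and \ref{prop:End_H(mH)-realization} to the decomposition $\mH^2_\alpha(\B^n) = \bigoplus_{k} \mP^k(\C^n)$ from Proposition~\ref{prop:Poly_U(n)}, and both use Schur's Lemma (the content of Remark~\ref{rmk:gamma_T_eigenvalue}) to conclude that the common eigenvalue on $\mP^k(\C^n)$ can be computed with any unit vector, giving $\widehat{\gamma}_T(|m|) = \gamma_T(m)$. Your explicit bookkeeping of the index partition $L_k = \{m : |m| = k\}$ is merely a spelled-out version of what the paper leaves implicit.
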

\begin{proof}
    The claims involving $R$ follow directly from Proposition~\ref{prop:End_H(mH)-realization}. By the last claim of such proposition, it also follows that for every $k \in \N$, the function $\gamma_T$ is constant on the set of values $m \in \N^n$ for which $e_m \in \mP^k(\B^n)$, in other words, when $|m| = k$.

    On the other hand, as in the proof of Proposition~\ref{prop:End_H(mH)-commutative} and by Proposition~\ref{prop:Poly_U(n)}, Schur's Lemma implies that $T_a$ acts by a scalar multiple on $\mP^k(\C^n)$ for every $k \in \N$. In particular, we have
    \[
        \left< T_a u_k, u_k \right>_\alpha = \left< T_a e_m, e_m \right>_\alpha,
    \]
    whenever $k = |m|$.
\end{proof}

As a consequence of Proposition~\ref{prop:End_H(mH)-realization} and Theorem~\ref{thm:End_U(n)} we obtain the following result.

\begin{corollary}
    \label{cor:End_U(n)}
    With the above notation, the assignment
    \[
        T \mapsto \widehat{\gamma}_T
    \]
    defines an isomorphism of algebras $\End_{\U(n)}(\mH^2_\alpha(\B^n)) \rightarrow \ell^\infty(\N)$.
\end{corollary}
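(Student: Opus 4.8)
The plan is to obtain this corollary as a direct specialization of Proposition~\ref{prop:End_H(mH)-realization} to the case $H = \U(n)$. First I would verify that the hypotheses of that proposition are met: by Proposition~\ref{prop:Poly_U(n)}, the dense subspace $\mP(\C^n) \subset \mH^2_\alpha(\B^n)$ decomposes as an algebraic direct sum of the mutually orthogonal, closed, irreducible $\U(n)$-submodules $\mP^k(\C^n)$, and these are pairwise non-isomorphic. Thus condition~(1) of Proposition~\ref{prop:End_H(mH)-commutative} holds, with the index set being $J = \N$ (the degrees $k$) rather than $\N^n$.

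Next I would match the notation of the general setup to the present one. The orthonormal basis used is $(e_m)_{m \in \N^n}$, so the index set of the basis is $L = \N^n$, and the partition $L = \bigcup_{k \in \N} L_k$ is given by $L_k = \{ m \in \N^n : |m| = k \}$, which indexes the basis $(e_m)_{|m| = k}$ of $\mP^k(\C^n)$. With these identifications, Proposition~\ref{prop:End_H(mH)-realization} asserts precisely that the assignment $T \mapsto \widehat{\gamma}_T$ is an algebra isomorphism onto $\ell^\infty(J) = \ell^\infty(\N)$, encompassing well-definedness, multiplicativity, injectivity and surjectivity.

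The only point requiring care is to confirm that the function $\widehat{\gamma}_T$ named in Theorem~\ref{thm:End_U(n)}, defined by $\widehat{\gamma}_T(k) = \langle T(u_k), u_k \rangle_\alpha$ for a chosen unit vector $u_k \in \mP^k(\C^n)$, coincides with the abstract $\widehat{\gamma}_T$ produced by Proposition~\ref{prop:End_H(mH)-realization}, which is characterized by $\widehat{\gamma}_T(k) = \gamma_T(m)$ for any $m$ with $|m| = k$. By Remark~\ref{rmk:gamma_T_eigenvalue}, $\langle T(u), u \rangle_\alpha$ equals the single eigenvalue of the scalar action of $T$ on $\mP^k(\C^n)$ for every unit vector $u$ there, so the choice of $u_k$ is immaterial and, taking $u = e_m$, we recover $\langle T(e_m), e_m\rangle_\alpha = \gamma_T(m)$; this is exactly the relation $\widehat{\gamma}_T(|m|) = \gamma_T(m)$ already established in Theorem~\ref{thm:End_U(n)}. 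Hence the two functions agree, and the corollary follows. I expect the only real subtlety to be this bookkeeping — in particular keeping the two index sets $L = \N^n$ and $J = \N$ distinct — since the genuine analytic and representation-theoretic content has already been absorbed into Propositions~\ref{prop:End_H(mH)-realization} and~\ref{prop:Poly_U(n)}.
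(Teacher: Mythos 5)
Your proposal is correct and follows exactly the paper's route: the paper obtains this corollary as an immediate consequence of Proposition~\ref{prop:End_H(mH)-realization} (applied with $H=\U(n)$, $L=\N^n$, $J=\N$, the hypotheses being supplied by Proposition~\ref{prop:Poly_U(n)}) together with Theorem~\ref{thm:End_U(n)}, offering no further argument. Your additional check that the $\widehat{\gamma}_T$ of Theorem~\ref{thm:End_U(n)}, defined via the unit vectors $u_k$, agrees with the abstract $\widehat{\gamma}_T$ of Proposition~\ref{prop:End_H(mH)-realization} is precisely the bookkeeping the paper has already absorbed into Theorem~\ref{thm:End_U(n)} and Remark~\ref{rmk:gamma_T_eigenvalue}.
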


\section{Toeplitz operators with separately radial symbols}\label{sec:Toeplitz_separately_radial}
Following \cite{QVBall1}, we say that a function $a \in L^\infty(\B^n, \dif z)$ is separately radial if it satisfies
\[
    a(z) = a(z_1, \dots, z_n) = a(|z_1|, \dots, |z_n|)
\]
for almost every $z \in \B^n$. In other words, the function $a$ is separately radial if and only if it is $\T^n$-invariant. The corresponding Toeplitz operator is thus called a separately radial Toeplitz operator. By Proposition~\ref{prop:Toeplitz_intertwining} it follows that a separately radial Toeplitz operator is $\T^n$-invariant in every Bergman space $\mH^2_\alpha(\B^n)$. These remarks and Theorem~\ref{thm:End_Tn} allow us to obtain the following result.

\begin{theorem}
    \label{thm:Toeplitz_separately_radial}
    The $C^*$-algebra generated by Toeplitz operators with separately radial symbols is commutative.

    More precisely, for every $\alpha > -1$, let us consider the orthonormal base $(e_m)_{m \in \N^n}$ of $\mH^2_\alpha(\B^n)$ where
    \[
        e_m(z) = \sqrt{\frac{\Gamma(n + |m| + \alpha + 1)}{m! \Gamma(n + \alpha + 1)}} z^m
    \]
    for every $z \in \B^n$ and $m \in \N^n$, and let us define the unitary map
    \begin{align*}
        R : \mH^2_\alpha(\B^n) &\rightarrow \ell^2(\N^n) \\
            R(f) &= \left(\left< f, e_m \right>_\alpha \right)_{m \in \N^n}.
    \end{align*}
    Then, for every separately radial symbol $a \in L^\infty(\B^n)$, the Toeplitz operator $T_a$ is unitarily equivalent to the multiplication operator $R T_a R^* = \gamma_{a,\alpha} I$ where the function $\gamma_{a,\alpha} \in \ell^\infty(\N^n)$ is given by
    \begin{align*}
        \gamma_{a,\alpha}(m) &= \left< T_a e_m, e_m \right>_\alpha
                = \left<a e_m, e_m \right>_\alpha \\
                &= \frac{2^n \Gamma(n + |m| + \alpha + 1)}{m!\Gamma(\alpha + 1)} \int_{\tau(\B^n)} a(r) r^{2m} (1 - r^2)^\alpha \prod_{j=1}^n r_j \dif r_j \\
                &= \frac{\Gamma(n + |m| + \alpha + 1)}{m!\Gamma(\alpha + 1)} \int_{\Delta(\B^n)} a(\sqrt{r}) r^m (1 - (r_1 + \dots + r_n))^\alpha \dif r,
    \end{align*}
    for every $m \in \N^n$, where $\Delta(\B^n)$ is the set of point $r \in \R^n$ such that $r_1 + \dots + r_n < 1$ and $r_j \geq 0$, for every $j = 1, \dots, n$, and $\sqrt{r} = (\sqrt{r}_1, \dots, \sqrt{r}_n)$.
\end{theorem}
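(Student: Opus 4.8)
The plan is to combine the abstract realization from Theorem~\ref{thm:End_Tn} with a direct computation of the diagonal matrix coefficients $\langle T_a e_m, e_m\rangle_\alpha$. The commutativity of the $C^*$-algebra is immediate: since any separately radial $a$ is $\T^n$-invariant, Proposition~\ref{prop:Toeplitz_intertwining} places $T_a$ in $\End_{\T^n}(\mH^2_\alpha(\B^n))$, which is commutative by Theorem~\ref{thm:End_Tn}; the $C^*$-algebra generated by such operators is a commutative subalgebra. The unitary equivalence of $T_a$ with the multiplication operator $\gamma_{a,\alpha}I$, and the first formula $\gamma_{a,\alpha}(m)=\langle T_a e_m,e_m\rangle_\alpha$, are likewise supplied directly by Theorem~\ref{thm:End_Tn}. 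So the only real work is to evaluate this matrix coefficient explicitly and massage it into the two integral expressions claimed.

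First I would reduce $\langle T_a e_m, e_m\rangle_\alpha$ to $\langle a e_m, e_m\rangle_\alpha$. Since $T_a e_m = B_\alpha(a e_m)$ and $B_\alpha$ is the orthogonal projection onto $\mH^2_\alpha(\B^n)$ with $e_m \in \mH^2_\alpha(\B^n)$, one has $\langle B_\alpha(a e_m), e_m\rangle_\alpha = \langle a e_m, e_m\rangle_\alpha$; this is the second equality in the statement. Next I would write this inner product as an honest integral over $\B^n$ against $\dif v_\alpha$. Inserting the normalization constant for $e_m$ gives
\[
    \langle a e_m, e_m\rangle_\alpha
    = \frac{\Gamma(n+|m|+\alpha+1)}{m!\,\Gamma(n+\alpha+1)}\,c_\alpha
      \int_{\B^n} a(z)\,|z^m|^2 (1-|z|^2)^\alpha \dif v(z),
\]
where $c_\alpha = \Gamma(n+\alpha+1)/(n!\,\Gamma(\alpha+1))$, so the product of the two Gamma/factorial prefactors collapses the $\Gamma(n+\alpha+1)$ factors and leaves $\Gamma(n+|m|+\alpha+1)/(m!\,\Gamma(\alpha+1))\cdot(1/n!)$. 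Because $a$ is separately radial and $|z^m|^2 = \prod_j |z_j|^{2m_j}$ depends only on the moduli, the angular part of the integral decouples. The main step is then to pass from the measure $\dif v$ to polar-type coordinates in each complex variable: writing $z_j = \rho_j e^{i\theta_j}$, the $n$ angular integrals each contribute $2\pi$, and I would track how the $n!/\pi^n$ and $2n\,r^{2n-1}$ conversion factors in \eqref{eq:spherical} interact with this change of variables.

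The main obstacle I anticipate is the bookkeeping of constants across the coordinate changes. I would organize it by first moving to the moduli variables $r_j = \rho_j \in [0,\infty)$ with the product region $\tau(\B^n) = \{r : \sum_j r_j^2 < 1,\ r_j \ge 0\}$; the Lebesgue measure $\dif z$ on $\C^n \cong \R^{2n}$ factors as $\prod_j (r_j\,\dif r_j\,\dif\theta_j)$, and integrating out the $n$ angles yields a factor $(2\pi)^n$. Combining this with $\dif z = (\pi^n/n!)\dif v$ converts the $v_\alpha$-integral into $2^n \cdot \tfrac{\Gamma(n+|m|+\alpha+1)}{m!\,\Gamma(\alpha+1)}\int_{\tau(\B^n)} a(r)\,r^{2m}(1-r^2)^\alpha \prod_j r_j\,\dif r_j$, matching the third expression. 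Finally, the substitution $r_j \mapsto \sqrt{r_j}$ (so $r_j\,\dif r_j = \tfrac12 \dif r_j$ in the new variable, producing $n$ factors of $\tfrac12$ that cancel the $2^n$) carries $\tau(\B^n)$ onto the simplex $\Delta(\B^n)$ and turns $r^{2m}$ into $r^m$ and $(1-r^2)^\alpha$ into $(1-(r_1+\dots+r_n))^\alpha$, giving the last expression. The verification that $\gamma_{a,\alpha} \in \ell^\infty(\N^n)$ follows from $\|T_a\| \le \|a\|_\infty$ together with the realization as a multiplication operator, so no separate estimate is needed.
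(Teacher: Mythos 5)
Your proposal is correct and follows essentially the same route as the paper: the abstract part (commutativity and unitary equivalence) is delegated to Proposition~\ref{prop:Toeplitz_intertwining} and Theorem~\ref{thm:End_Tn}, and the explicit formulas are obtained by expanding $\left<a e_m, e_m\right>_\alpha$, passing to polar coordinates in each complex variable, and then substituting $r \mapsto r^2$, with your constant bookkeeping ($c_\alpha$, $\|z^m\|_\alpha^2$, the $(2\pi)^n$ angular factor, and the cancellation of $2^n$ against the Jacobian $\tfrac{1}{2^n}$) matching the paper's computation exactly. The only cosmetic difference is that you spell out the reduction $\left<T_a e_m, e_m\right>_\alpha = \left<a e_m, e_m\right>_\alpha$ via self-adjointness of the Bergman projection, which the paper uses implicitly.
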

\begin{proof}
    By Proposition~\ref{prop:Toeplitz_intertwining} and Theorem~\ref{thm:End_Tn} it is enough to compute
    \begin{align*}
        \left< a e_m, e_m \right>_\alpha
            &= c_\alpha \int_{\B^n} a(z) |e_m(z)|^2 (1 - |z|^2)^\alpha \dif v(z) \\
            &= \frac{c_\alpha n!}{\|z^m\|^2_\alpha \pi^n}
                \int_{\B^n} a(|z_1|, \dots,|z_n|) |z^m|^2 (1 - |z|^2)^\alpha \dif z \\
        \intertext{and using polar coordinates in each axis of $\C^n$ we obtain}
            &= \frac{2^n \Gamma(n + |m| + \alpha + 1)}{m!\Gamma(\alpha + 1)}
                \int_{\tau(\B^n)} a(r) r^{2m} (1 - r^2)^\alpha \prod_{j=1}^n r_j \dif r_j.
    \end{align*}
    The last identity is obtained by applying the change of coordinates $r \mapsto r^2$.
\end{proof}

\begin{remark}
    We note that the formulas of the previous result are exactly the same as those found in Theorem~10.1 from \cite{QVBall1} and computed in Theorem~3.1 from \cite{QVReinhardt}.
\end{remark}

As a consequence of Theorem~\ref{thm:Toeplitz_separately_radial} we obtain the following orthogonality relations.

\begin{corollary}
    \label{cor:orthogonality_relations_Tn}
    If $a \in L^\infty(\B^n, \dif z)$ is a separately radial symbol, then for every $\alpha > -1$ we have
    \begin{align*}
        &\left< T_a z^m, z^{m'} \right>_\alpha  \\
        &= \left< a z^m, z^{m'} \right>_\alpha =
                \begin{cases}
            0 &\text{if } m \not= m' \\
            \displaystyle\int_{\Delta(\B^n)} a(\sqrt{r}) r^m (1 - (r_1 + \dots + r_n))^\alpha \dif r
            &\text{if } m = m'
        \end{cases},
    \end{align*}
    for every $m, m' \in \N^n$.
\end{corollary}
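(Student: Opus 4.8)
The plan is to reduce the statement to Theorem~\ref{thm:Toeplitz_separately_radial} together with the orthogonality built into Proposition~\ref{prop:Poly_Tn}, so that essentially no new computation is required. First I would establish the initial identity $\langle T_a z^m, z^{m'}\rangle_\alpha = \langle a z^m, z^{m'}\rangle_\alpha$ for all $m, m' \in \N^n$. Since $T_a z^m = B_\alpha(a z^m)$ and the Bergman projection $B_\alpha$ is self-adjoint and fixes the holomorphic monomial $z^{m'} \in \mH^2_\alpha(\B^n)$, we have $\langle B_\alpha(a z^m), z^{m'}\rangle_\alpha = \langle a z^m, B_\alpha z^{m'}\rangle_\alpha = \langle a z^m, z^{m'}\rangle_\alpha$, which is the first equality in the statement.

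For the off-diagonal case $m \neq m'$ I would argue representation-theoretically. By Proposition~\ref{prop:Toeplitz_intertwining} the operator $T_a$ belongs to $\End_{\T^n}(\mH^2_\alpha(\B^n))$, and by the Schur's Lemma argument in the proof of Proposition~\ref{prop:End_H(mH)-commutative}, combined with the fact from Proposition~\ref{prop:Poly_Tn} that the lines $\C z^m$ are mutually non-isomorphic irreducible $\T^n$-modules, the operator $T_a$ leaves each $\C z^m$ invariant. Hence $T_a z^m \in \C z^m$, which is orthogonal to $z^{m'} \in \C z^{m'}$ when $m \neq m'$, giving $\langle T_a z^m, z^{m'}\rangle_\alpha = 0$. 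Equivalently, one can compute $\langle a z^m, z^{m'}\rangle_\alpha$ directly in polar coordinates $z_j = r_j e^{i\theta_j}$: the separately radial symbol has no angular dependence, so the angular factor $\prod_{j=1}^n \int_0^{2\pi} e^{i(m_j - m'_j)\theta_j}\,\dif\theta_j$ vanishes unless $m = m'$.

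For the diagonal value $m = m'$, I would reduce to the computation already carried out in Theorem~\ref{thm:Toeplitz_separately_radial}. Using the normalization $z^m = \|z^m\|_\alpha\, e_m$, one gets $\langle a z^m, z^m\rangle_\alpha = \|z^m\|^2_\alpha\, \langle a e_m, e_m\rangle_\alpha = \|z^m\|^2_\alpha\, \gamma_{a,\alpha}(m)$. Substituting the value of $\gamma_{a,\alpha}(m)$ from Theorem~\ref{thm:Toeplitz_separately_radial} and $\|z^m\|^2_\alpha = m!\,\Gamma(n + \alpha + 1)/\Gamma(n + |m| + \alpha + 1)$, the factors $m!$ and $\Gamma(n + |m| + \alpha + 1)$ cancel, leaving a constant multiple of $\int_{\Delta(\B^n)} a(\sqrt{r})\, r^m\,(1 - (r_1 + \dots + r_n))^\alpha\,\dif r$.

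The structural steps, namely the first equality and the off-diagonal vanishing, are immediate from the earlier results, so I expect the only delicate point to be the careful bookkeeping of the $\Gamma$-function normalizing constants in the diagonal term; as a sanity check I would verify the case $m = m' = 0$ with $a \equiv 1$ against $\langle 1, 1\rangle_\alpha = 1$ in order to pin down the precise constant in front of the integral.
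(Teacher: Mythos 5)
Your reduction is exactly the paper's own (implicit) proof: the corollary is stated there without argument, as an immediate consequence of Theorem~\ref{thm:Toeplitz_separately_radial}, and your three steps---self-adjointness of $B_\alpha$ together with $B_\alpha z^{m'} = z^{m'}$ for the first equality, Schur's Lemma (or, equivalently, the vanishing of the angular integrals) for the off-diagonal case, and the rescaling $z^m = \|z^m\|_\alpha\, e_m$ for the diagonal case---are precisely that reduction. Structurally your proposal is correct and complete.

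The one point you deferred, the normalizing constant, is in fact where the printed statement goes wrong, and the sanity check you propose is exactly what detects it. Completing your computation: since $\|z^m\|_\alpha^2 = m!\,\Gamma(n+\alpha+1)/\Gamma(n+|m|+\alpha+1)$ and $\gamma_{a,\alpha}(m) = \frac{\Gamma(n+|m|+\alpha+1)}{m!\,\Gamma(\alpha+1)}\int_{\Delta(\B^n)} a(\sqrt{r})\,r^m\,(1-(r_1+\dots+r_n))^\alpha \dif r$, the factors $m!$ and $\Gamma(n+|m|+\alpha+1)$ cancel and one is left with
\[
\left< a z^m, z^m \right>_\alpha \;=\; \frac{\Gamma(n+\alpha+1)}{\Gamma(\alpha+1)} \int_{\Delta(\B^n)} a(\sqrt{r})\,r^m\,(1-(r_1+\dots+r_n))^\alpha \dif r,
\]
so the constant is $(\alpha+1)(\alpha+2)\cdots(\alpha+n)$, not $1$. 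Your proposed check with $a\equiv 1$, $m=m'=0$ confirms this: the left-hand side is $v_\alpha(\B^n)=1$, while the Dirichlet integral gives $\int_{\Delta(\B^n)}(1-(r_1+\dots+r_n))^\alpha \dif r = \Gamma(\alpha+1)/\Gamma(n+\alpha+1)$ (for $n=2$, $\alpha=0$ this is $1/2$). Hence the corollary as printed omits the factor $\Gamma(n+\alpha+1)/\Gamma(\alpha+1)$ on the diagonal; it is consistent with Theorem~\ref{thm:Toeplitz_separately_radial} only after reinserting that factor. Your argument, carried to completion, proves the corrected statement, and the verification at $m=0$, $a\equiv 1$ should be kept in the final write-up, since it is what pins the constant down.
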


\begin{remark}
    Note that in the previous corollary another formula can be obtained from the expression of $\gamma_{a,\alpha}$ in Theorem~\ref{thm:Toeplitz_separately_radial} that involves integration over $\tau(\B^n)$.
\end{remark}

\section{Toeplitz operators with radial symbols}\label{sec:Toeplitz_radial}
Following \cite{GKV}, we say that a function $a \in L^\infty(\B^n, \dif z)$ is radial if it satisfies
\[
    a(z) = a(|z|)
\]
for almost every $z \in \B^n$. Hence, the function $a$ is radial if and only if it is $\U(n)$-invariant. A Toeplitz operator is called radial if its symbol is radial. By Proposition~\ref{prop:Toeplitz_intertwining} it follows that a radial Toeplitz operator is $\U(n)$-invariant in every Bergman space $\mH^2_\alpha(\B^n)$. We will now apply Theorem~\ref{thm:End_U(n)} to obtain the structure of radial Toeplitz operators. This requires the choice of unitary vectors on each term in the decomposition of $\mH^2_\alpha(\B^n)$ into irreducible $\U(n)$-modules, in other words, we need to choose and normalize a non-zero element in each subspace $\mP^k(\C^n)$.

For each $k \in \N$, let us choose the element of $\mP^k(\C)$ given by
\[
    f_k(z) =
        \sum_{\substack{m \in \N^n \\ |m| = k}} \sqrt{\binom{k}{m}} z^m,
\]
where the multinomial coefficient is defined by
\[
    \binom{k}{m} = \frac{k!}{m_1! \cdot \ldots \cdot m_n!},
\]
for every $m \in \N^n$ such that $|m| = k$.

\begin{lemma}
    \label{lem:f_k_norm}
    For every $\alpha > -1$ and for the above notation we have
    \[
        \|f_k\|^2_\alpha = nc_\alpha B(n+k,\alpha+1) = \frac{B(n+k,\alpha+1)}{B(n,\alpha+1)}.
    \]
\end{lemma}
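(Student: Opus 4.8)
The plan is to exploit the orthogonality of the monomials established in Proposition~\ref{prop:Poly_Tn} together with the explicitly recalled norms $\|z^m\|^2_\alpha = m!\,\Gamma(n+\alpha+1)/\Gamma(n+|m|+\alpha+1)$. Since the terms $z^m$ with $|m|=k$ appearing in $f_k$ are mutually orthogonal in $\mH^2_\alpha(\B^n)$, the norm expands diagonally:
\[
    \|f_k\|^2_\alpha = \sum_{\substack{m \in \N^n \\ |m| = k}} \binom{k}{m} \|z^m\|^2_\alpha
        = \sum_{\substack{m \in \N^n \\ |m| = k}} \binom{k}{m} \frac{m!\,\Gamma(n+\alpha+1)}{\Gamma(n+k+\alpha+1)}.
\]
The crucial simplification is that the multinomial coefficient is precisely designed so that $\binom{k}{m}\,m! = k!$ for every $m$ with $|m|=k$, so each summand becomes the constant $k!\,\Gamma(n+\alpha+1)/\Gamma(n+k+\alpha+1)$, independent of $m$.

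The remaining ingredient is purely combinatorial: the number of multi-indices $m \in \N^n$ with $|m| = k$ is the classical stars-and-bars count $\binom{n+k-1}{n-1}$. Multiplying this by the constant above yields
\[
    \|f_k\|^2_\alpha = \binom{n+k-1}{n-1} \frac{k!\,\Gamma(n+\alpha+1)}{\Gamma(n+k+\alpha+1)}
        = \frac{(n+k-1)!\,\Gamma(n+\alpha+1)}{(n-1)!\,\Gamma(n+k+\alpha+1)},
\]
after cancelling $k!$. It then remains to match this against the two closed forms in the statement.

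For the first closed form, I would unfold the Beta function as $B(n+k,\alpha+1) = \Gamma(n+k)\Gamma(\alpha+1)/\Gamma(n+k+\alpha+1)$ and the constant as $c_\alpha = \Gamma(n+\alpha+1)/(n!\,\Gamma(\alpha+1))$, so that the $\Gamma(\alpha+1)$ factors cancel and $n/n! = 1/(n-1)!$, giving $nc_\alpha B(n+k,\alpha+1) = \Gamma(n+\alpha+1)\Gamma(n+k)/\bigl((n-1)!\,\Gamma(n+k+\alpha+1)\bigr)$; recognizing $\Gamma(n+k) = (n+k-1)!$ shows this agrees exactly with the displayed expression for $\|f_k\|^2_\alpha$. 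The second equality $nc_\alpha B(n+k,\alpha+1) = B(n+k,\alpha+1)/B(n,\alpha+1)$ is then immediate, since the paper's definition of $c_\alpha$ records $c_\alpha = 1/\bigl(n B(n,\alpha+1)\bigr)$, i.e. $nc_\alpha = 1/B(n,\alpha+1)$. I do not anticipate any genuine obstacle here: the only thing to watch is the Gamma-versus-factorial bookkeeping in identifying $\Gamma(n+k)=(n+k-1)!$ and in confirming the stars-and-bars count, both of which are routine.
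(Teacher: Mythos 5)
Your proof is correct, but it takes a genuinely different route from the paper's. Both arguments begin the same way, using orthogonality of the monomials to reduce $\|f_k\|^2_\alpha$ to a diagonal sum $\sum_{|m|=k} \binom{k}{m}\|z^m\|^2_\alpha$. From there you proceed purely algebraically: you quote the recalled norm formula $\|z^m\|^2_\alpha = m!\,\Gamma(n+\alpha+1)/\Gamma(n+|m|+\alpha+1)$, observe that $\binom{k}{m}m! = k!$ makes every summand constant, and finish with the stars-and-bars count $\binom{n+k-1}{n-1}$ of multi-indices of length $k$; your Gamma/Beta bookkeeping matching this to $nc_\alpha B(n+k,\alpha+1) = B(n+k,\alpha+1)/B(n,\alpha+1)$ checks out. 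The paper instead keeps everything inside the integral: it uses the multinomial theorem to collapse $\sum_{|m|=k}\binom{k}{m}|z_1|^{2m_1}\cdots|z_n|^{2m_n}$ into $|z|^{2k}$, then evaluates $c_\alpha\int_{\B^n}|z|^{2k}(1-|z|^2)^\alpha \dif v(z)$ in spherical coordinates via \eqref{eq:spherical}, landing directly on the Beta function. Your version is shorter and avoids any new integration, at the cost of invoking the closed-form monomial norms and a combinatorial count. The paper's integral technique has a structural payoff, though: the identical multinomial-collapse-plus-spherical-coordinates computation is reused in the proof of Theorem~\ref{thm:Toeplitz_radial}, where the radial symbol $a(z)$ sits inside the integral and one cannot simply quote norms, so the lemma's proof there doubles as a rehearsal of the main computation.
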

\begin{proof}
    We compute
    \begin{align*}
        \|f_k\|^2_\alpha &= c_\alpha \int_{\B^n} |f_k(z)|^2 (1 - |z|^2)^\alpha \dif v(z) \\
                &= c_\alpha \sum_{\substack{m, m' \in \N^n \\ |m| = |m'| = k}}
                    \sqrt{\binom{k}{m}\binom{k}{m'}}
                    \int_{\B^n}z^m \overline{z}^{m'} (1 - |z|^2)^\alpha \dif v(z), \\
        \intertext{by the orthogonality of the monomials in the Bergman spaces (see \cite{Zhu}) we have}
                &= c_\alpha \sum_{\substack{m \in \N^n \\ |m| = k}}
                    \binom{k}{m}
                    \int_{\B^n}z^m \overline{z}^{m} (1 - |z|^2)^\alpha \dif v(z) \\
                &= c_\alpha \int_{\B^n}
                    \sum_{\substack{m \in \N^n \\ |m| = k}}
                        \binom{k}{m} |z_1|^{2m_1} \cdot \ldots \cdot |z_n|^{2m_n}
                            (1 - |z|^2)^\alpha \dif v(z) \\
                &= c_\alpha \int_{\B^n} (|z_1|^2 + \dots + |z_n|^2)^k (1 - |z|^2)^\alpha \dif v(z), \\
        \intertext{and with respect to spherical coordinates we obtain from \eqref{eq:spherical}}
                &= 2nc_\alpha \int_0^1 \int_{S^{2n-1}} r^{2k} (1 - r^2)^\alpha r^{2n-1}\dif r d\sigma \\
                &= 2nc_\alpha \int_0^1 r^{2n + 2k - 1} (1 - r^2)^\alpha \dif r \\
                &= nc_\alpha B(n+k,\alpha+1) \\
                &= \frac{B(n+k,\alpha+1)}{B(n,\alpha+1)}.
    \end{align*}
\end{proof}

For every $\alpha > -1$, we consider the set of polynomials given by
\[
    \left(
        u_k = \frac{f_k}{\|f_k\|_\alpha}
    \right)_{k \in \N}
\]
which is orthonormal in $\mH^2_\alpha(\B^n)$, for every $\alpha > -1$, and that has exactly one element in each space $\mP^k(\C^n)$.

\begin{theorem}
    \label{thm:Toeplitz_radial}
    The $C^*$-algebra generated by Toeplitz operators with radial symbols is commutative.

    More precisely, for every $\alpha > -1$, let us consider the orthonormal base $(e_m)_{m \in \N^n}$ of $\mH^2_\alpha(\B^n)$ defined in Theorem~\ref{thm:Toeplitz_separately_radial}, and the unitary operator
    \begin{align*}
        R : \mH^2_\alpha(\B^n) &\rightarrow \ell^2(\N^n) \\
            R(f) &= \left(\left< f, e_m \right>_\alpha \right)_{m \in \N^n}.
    \end{align*}
    Then, for every radial symbol $a \in L^\infty(\B^n, \dif z)$, the Toeplitz operator $T_a$ is unitarily equivalent to the multiplication operator $R T_a R^* = \gamma_{a,\alpha} I$ where $\gamma_{a,\alpha} \in \ell^\infty(\N^n)$ is given by
    \[
        \gamma_{a,\alpha}(m) = \left< T_a e_m, e_m \right>_\alpha
                = \left<a e_m, e_m \right>_\alpha
    \]
    for every $m \in \N^n$. This function satisfies $\gamma_{a,\alpha}(m) = \gamma_{a,\alpha}(m')$, for $m, m' \in \N^n$ such that $|m| = |m'|$, and so it determines a function
    \begin{align*}
        \widehat{\gamma}_{a,\alpha} : \N &\rightarrow \C \\
            \widehat{\gamma}_{a,\alpha}(|m|) &= \gamma_{a,\alpha}(m)
    \end{align*}
    where $m \in \N^n$. Furthermore, we have
    \begin{align*}
        \widehat{\gamma}_{a,\alpha}(k)
            &= \frac{2\displaystyle\int_0^1 a(r) r^{2n + 2k - 1}(1 - r^2)^\alpha \dif r}{B(n+k,\alpha+1)} \\
            &= \frac{\displaystyle\int_0^1 a(\sqrt{r}) r^{n + k - 1}(1 - r)^\alpha \dif r}{B(n+k,\alpha+1)},
    \end{align*}
    for every $k \in \N$.
\end{theorem}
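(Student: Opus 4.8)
The plan is to reduce everything to the structure theory for $\U(n)$-intertwining operators already in hand and then to carry out a single radial integral. Since a radial symbol $a$ is exactly a $\U(n)$-invariant function, Proposition~\ref{prop:Toeplitz_intertwining} gives $T_a \in \End_{\U(n)}(\mH^2_\alpha(\B^n))$. Consequently the commutativity of the generated $C^*$-algebra, the unitary equivalence $R T_a R^* = \gamma_{a,\alpha} I$, the identity $\gamma_{a,\alpha}(m) = \langle a e_m, e_m\rangle_\alpha$, and the fact that $\gamma_{a,\alpha}$ depends only on $|m|$ (so that it descends to a function $\widehat{\gamma}_{a,\alpha}$ on $\N$) all follow immediately from Theorem~\ref{thm:End_U(n)}. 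The only genuine computation is the explicit evaluation of $\widehat{\gamma}_{a,\alpha}(k)$.

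For that evaluation I would invoke Remark~\ref{rmk:gamma_T_eigenvalue}: because $T_a$ acts as the single scalar $\widehat{\gamma}_{a,\alpha}(k)$ on the irreducible module $\mP^k(\C^n)$, this eigenvalue may be computed against \emph{any} unit vector of $\mP^k(\C^n)$. I would choose the distinguished vector $u_k = f_k / \|f_k\|_\alpha$, so that $\widehat{\gamma}_{a,\alpha}(k) = \langle a u_k, u_k\rangle_\alpha = \|f_k\|_\alpha^{-2}\,\langle a f_k, f_k\rangle_\alpha$, and then compute $\langle a f_k, f_k\rangle_\alpha$ by hand, mirroring the calculation of Lemma~\ref{lem:f_k_norm}.

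Expanding $|f_k(z)|^2 = \sum_{|m|=|m'|=k}\sqrt{\binom{k}{m}\binom{k}{m'}}\,z^m\overline{z}^{m'}$ and integrating against the radial weight $a(|z|)(1-|z|^2)^\alpha$, the cross terms with $m\neq m'$ vanish by the orthogonality of monomials — the angular integrals kill them, exactly as in Lemma~\ref{lem:f_k_norm}. The surviving diagonal is $\sum_{|m|=k}\binom{k}{m}|z_1|^{2m_1}\cdots|z_n|^{2m_n} = |z|^{2k}$ by the multinomial theorem; this collapse is the crux of the argument and is precisely what the choice of $f_k$ was engineered to produce. Passing to spherical coordinates via \eqref{eq:spherical} then yields $\langle a f_k, f_k\rangle_\alpha = 2nc_\alpha\int_0^1 a(r)\, r^{2n+2k-1}(1-r^2)^\alpha\,\dif r$, and dividing by $\|f_k\|^2_\alpha = nc_\alpha B(n+k,\alpha+1)$ from Lemma~\ref{lem:f_k_norm} gives the first displayed formula. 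The second formula follows from the substitution $r\mapsto\sqrt{r}$.

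I would expect no serious obstacle: the representation-theoretic content is entirely black-boxed in Theorem~\ref{thm:End_U(n)}, and the remaining analytic step is a routine radial integral. The two points deserving a word of care are the justification that the eigenvalue is independent of the chosen unit vector — which is exactly the content of Remark~\ref{rmk:gamma_T_eigenvalue} — and the observation that $a(|z|)(1-|z|^2)^\alpha$ is genuinely radial, so that the monomial orthogonality applies to annihilate the off-diagonal terms.
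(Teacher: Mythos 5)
Your proposal is correct and follows essentially the same route as the paper's own proof: reduce the structural claims to Theorem~\ref{thm:End_U(n)} via Proposition~\ref{prop:Toeplitz_intertwining}, compute the eigenvalue on $\mP^k(\C^n)$ against the distinguished unit vector $u_k = f_k/\|f_k\|_\alpha$ as licensed by Remark~\ref{rmk:gamma_T_eigenvalue}, kill the off-diagonal terms using the invariance (radiality) of the measure $a(z)(1-|z|^2)^\alpha \dif v(z)$, collapse the diagonal by the multinomial theorem, and finish with spherical coordinates and Lemma~\ref{lem:f_k_norm}. Your flagged point of care --- that monomial orthogonality must be justified against the $a$-weighted measure, not merely the Bergman measure --- is exactly the subtlety the paper addresses by the same invariance argument.
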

\begin{proof}
    By the previous remarks, Theorem~\ref{thm:End_U(n)} and our choice of unitary vectors $(u_k)_{k \in \N}$ it is enough to compute the following
    \begin{align*}
        \left< a u_k, u_k \right>_\alpha
            &= c_\alpha \int_{\B^n} a(z) |u_k(z)|^2 (1 - |z|^2)^\alpha \dif v(z) \\
            &= \frac{c_\alpha}{\|f_k\|^2_\alpha}
                    \int_{\B^n} a(z)
                    \sum_{\substack{m \in \N^n \\ |m| = k}} \sqrt{\binom{k}{m}} z^m
                    \sum_{\substack{m' \in \N^n \\ |m'| = k}} \sqrt{\binom{k}{m'}} \overline{z}^{m'}
                        (1 - |z|^2)^\alpha \dif v(z) \\
            &= \frac{c_\alpha}{\|f_k\|^2_\alpha}
                    \sum_{\substack{m, m' \in \N^n \\ |m| = |m'| = k}} \sqrt{\binom{k}{m}\binom{k}{m'}}
                        \int_{\B^n} a(z) z^m \overline{z}^{m'} (1 - |z|^2)^\alpha \dif v(z), \\
        \intertext{we now recall (see \cite{Zhu}) that the orthogonality of the monomials $z^m$ depends on the invariance of the measure $v_\alpha$ with respect to $\U(n)$, and since $a(z)(1 - |z|^2)^\alpha \dif v(z)$ satisfies this same invariance we have}
            &= \frac{c_\alpha}{\|f_k\|^2_\alpha}
                    \sum_{\substack{m \in \N^n \\ |m| = k}} \binom{k}{m}
                        \int_{\B^n} a(z) z^m \overline{z}^m (1 - |z|^2)^\alpha \dif v(z) \\
            &= \frac{c_\alpha}{\|f_k\|^2_\alpha}
                    \int_{\B^n} a(z) \sum_{\substack{m \in \N^n \\ |m| = k}} \binom{k}{m}
                        |z_1|^{2m_1} \dots |z_n|^{2m_n}(1 - |z|^2)^\alpha \dif v(z) \\
            &= \frac{c_\alpha}{\|f_k\|^2_\alpha}
                    \int_{\B^n} a(z) |z|^{2k} (1 - |z|^2)^\alpha \dif v(z), \\
        \intertext{introducing spherical coordinates we obtain}
            &= \frac{2nc_\alpha}{\|f_k\|^2_\alpha}
                    \int_0^1 a(r) r^{2n + 2k - 1} (1 - r^2)^\alpha \dif r, \\
        \intertext{and applying Lemma~\ref{lem:f_k_norm} we obtain}
            &= \frac{2\displaystyle\int_0^1 a(r) r^{2n + 2k - 1} (1 - r^2)^\alpha \dif r}{B(n+k,\alpha+1)}.
     \end{align*}
     This provides the value of $\widehat{\gamma}_{a,\alpha}(k)$ for $k \in \N$. The last identity in the statement is obtained by applying the change of coordinates $r \mapsto r^2$.
\end{proof}

\begin{remark}
    Our formulas are similar to formula (3.1) found in Theorem~3.1 from \cite{GKV}. In fact, from following the definition of the coefficients considered in \cite{GKV} it is possible to prove that they are exactly the same. However, our approach has provided an interpretation of such coefficients and the  expression for $\widehat{\gamma}_{a,\alpha}$: the function $\widehat{\gamma}_{a,\alpha}$ of the multiplication operator equivalent to $T_a$ is computed as the orthogonal projections of the values of $T_a$ onto the irreducible components of $\mH^2_\alpha(\B^n)$ with respect to the representation of the group $\U(n)$. Moreover, this representation theoretic approach has in fact allowed us to obtain our more explicitly presented formulas.
\end{remark}

From Theorem~\ref{thm:Toeplitz_radial} we obtain the following orthogonality relations.

\begin{corollary}
    \label{cor:orthogonality_relations_U(n)}
    If $a \in L^\infty(\B^n,\dif z)$ is a radial symbol, then for every $\alpha > -1$ we have
    \begin{align*}
        &\left< T_a z^m, z^{m'} \right>_\alpha  \\
        &= \left< a z^m, z^{m'} \right>_\alpha  \\
        &=
            \begin{cases}
                0 &\text{if } |m| \not= |m'| \\
                \displaystyle\frac{\Gamma(n + |m| + \alpha + 1)}{m! \Gamma(n + \alpha + 1)B(n+k,\alpha+1)}
                \displaystyle\int_0^1 a(\sqrt{r}) r^{n + k - 1}(1 - r)^\alpha \dif r &\text{if } |m| = |m'|
            \end{cases},
    \end{align*}
    for every $m, m' \in \N^n$.
\end{corollary}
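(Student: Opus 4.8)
The plan is to reduce the whole statement to computing the inner products $\langle a z^m, z^{m'}\rangle_\alpha$, and then to separate the off-diagonal (vanishing) case from the diagonal value, obtaining the former from $\T^n$-invariance and the latter from Theorem~\ref{thm:Toeplitz_radial}.

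First I would dispose of the equality $\langle T_a z^m, z^{m'}\rangle_\alpha = \langle a z^m, z^{m'}\rangle_\alpha$. Since $T_a = B_\alpha(a\,\cdot\,)$ with $B_\alpha$ the orthogonal, hence self-adjoint, projection onto $\mH^2_\alpha(\B^n)$, and since $z^{m'} \in \mP(\C^n) \subset \mH^2_\alpha(\B^n)$ so that $B_\alpha z^{m'} = z^{m'}$, one has $\langle T_a z^m, z^{m'}\rangle_\alpha = \langle a z^m, B_\alpha z^{m'}\rangle_\alpha = \langle a z^m, z^{m'}\rangle_\alpha$. This is exactly the reduction already used implicitly in Theorems~\ref{thm:Toeplitz_separately_radial} and~\ref{thm:Toeplitz_radial}.

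Next, for the vanishing I would exploit invariance. As $a$ is radial it is in particular $\T^n$-invariant, and $v_\alpha$ is $\U(n)$-invariant, hence $\T^n$-invariant; thus the finite measure $a(z)(1-|z|^2)^\alpha\,\dif v(z)$ is $\T^n$-invariant. Writing $\langle a z^m, z^{m'}\rangle_\alpha = c_\alpha \int_{\B^n} a(z)\, z^m \overline{z}^{m'} (1-|z|^2)^\alpha \,\dif v(z)$ and performing the substitution $z \mapsto tz$ with $t \in \T^n$, the factor $a(z)(1-|z|^2)^\alpha\,\dif v(z)$ is unchanged while $z^m \overline{z}^{m'}$ picks up the character $\prod_{j=1}^n t_j^{m_j - m_j'}$. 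Hence the integral equals itself times this factor for every $t \in \T^n$, forcing it to vanish whenever $m \neq m'$; in particular it vanishes when $|m| \neq |m'|$, which is the first branch of the corollary. I would also record that the integral vanishes for $m \neq m'$ even when $|m| = |m'|$, so that the nonzero contributions sit only on the diagonal $m = m'$; this is precisely the orthogonality of monomials already invoked inside the proof of Theorem~\ref{thm:Toeplitz_radial}.

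Finally, for the diagonal value I would invoke Theorem~\ref{thm:Toeplitz_radial} directly. Since $e_m = z^m/\|z^m\|_\alpha$, we have $\langle a z^m, z^m\rangle_\alpha = \|z^m\|^2_\alpha \langle a e_m, e_m\rangle_\alpha = \|z^m\|^2_\alpha\, \widehat{\gamma}_{a,\alpha}(|m|)$. Substituting $\|z^m\|^2_\alpha = \frac{m!\,\Gamma(n+\alpha+1)}{\Gamma(n+|m|+\alpha+1)}$ together with the closed form $\widehat{\gamma}_{a,\alpha}(k) = B(n+k,\alpha+1)^{-1}\int_0^1 a(\sqrt{r})\, r^{n+k-1}(1-r)^\alpha\,\dif r$ (with $k = |m|$) yields the expression in the second branch. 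I do not expect a genuine obstacle here, since the hard analytic content — the evaluation of the radial integral and the identification of $\widehat{\gamma}_{a,\alpha}$ — has already been carried out in Theorem~\ref{thm:Toeplitz_radial}; the only point demanding care is the bookkeeping of the Gamma and Beta factors, in particular the orientation of the ratio of Gamma functions produced by $\|z^m\|^2_\alpha$, since the prefactor multiplying the integral is $\|z^m\|^2_\alpha / B(n+|m|,\alpha+1)$.
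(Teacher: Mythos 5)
Your proposal is correct in substance, and it splits naturally into a part that coincides with the paper and a part that genuinely differs. For the diagonal value you do exactly what the paper does: write $z^m = \|z^m\|_\alpha\, e_m$ and invoke $\left<a e_m, e_m\right>_\alpha = \gamma_{a,\alpha}(m) = \widehat{\gamma}_{a,\alpha}(|m|)$ together with the closed form from Theorem~\ref{thm:Toeplitz_radial}. For the vanishing, however, you diverge: the paper deduces it from Schur's Lemma (as in Proposition~\ref{prop:End_H(mH)-commutative}, $T_a$ preserves each $\mP^k(\C^n)$) combined with the mutual orthogonality of the spaces $\mP^k(\C^n)$ from Proposition~\ref{prop:Poly_U(n)}, whereas you use only the $\T^n$-invariance of the measure $a(z)(1-|z|^2)^\alpha \dif v(z)$ and a torus-character computation. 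Your route is more elementary and buys a strictly stronger conclusion: $\left<a z^m, z^{m'}\right>_\alpha = 0$ for \emph{all} $m \neq m'$, not merely for $|m| \neq |m'|$. This stronger statement is worth recording, because it shows that the second branch of the corollary as printed can only be asserted when $m = m'$; for $m \neq m'$ with $|m| = |m'|$ the inner product vanishes, while the displayed integral is in general nonzero (take $a \equiv 1$).

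One point where you should be more decisive rather than merely cautious: you correctly identify the diagonal prefactor as $\|z^m\|^2_\alpha / B(n+|m|,\alpha+1) = \frac{m!\,\Gamma(n+\alpha+1)}{\Gamma(n+|m|+\alpha+1)\, B(n+|m|,\alpha+1)}$, and you flag ``the orientation of the ratio of Gamma functions'' as the delicate issue, but you then claim this ``yields the expression in the second branch''. It does not: the printed statement carries the reciprocal ratio $\frac{\Gamma(n+|m|+\alpha+1)}{m!\,\Gamma(n+\alpha+1)}$. Your computation --- and the paper's own proof, which performs the very same rescaling of $e_m$ to $z^m$ --- produces the inverted ratio, so the formula displayed in the corollary has the Gamma factors upside down. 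The sanity check $a \equiv 1$ settles this instantly: then $T_a = I$, the left-hand side equals $\|z^m\|^2_\alpha = \frac{m!\,\Gamma(n+\alpha+1)}{\Gamma(n+|m|+\alpha+1)} \le 1$, and $\widehat{\gamma}_{1,\alpha} \equiv 1$, whereas the printed prefactor is $\ge 1$. So your argument is sound, but you should state plainly that it proves the corollary with the corrected prefactor (and with the second branch restricted to $m = m'$), rather than suggesting agreement with the display as written.
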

\begin{proof}
    By the proof of Proposition~\ref{prop:End_H(mH)-commutative}, Schur's Lemma implies that $T_a$ preserves the spaces $\mP^k(\C^n)$ for every $k \in \N$. Moreover, by Proposition~\ref{prop:Poly_U(n)} these spaces are mutually orthogonal. This implies the first case.

    For the second case, we use again the application of Schur's Lemma in Proposition~\ref{prop:Poly_U(n)}. This shows that the value $\left< T_a u, u \right>_\alpha$ is the same for every unitary vector $u \in \mP^k(\C^n)$. In particular, we have
    \[
        \left< T_a e_m, e_m \right>_\alpha = \left< T_a u_{|m|}, u_{|m|} \right>_\alpha
    \]
    for every $m \in \N^n$. And the result follows by writing down $e_m$ in terms of $z^m$ and the formulas from Theorem~\ref{thm:Toeplitz_radial}.
\end{proof}

\begin{remark}
    Note that in the previous corollary another formula can be obtained from the expression of $\widehat{\gamma}_{a,\alpha}$ in Theorem~\ref{thm:Toeplitz_radial} that uses $a(r)$ instead of $a(\sqrt{r})$.
\end{remark}

\end{document}